\providecommand{\keywords}[1]{\textbf{\textit{Keywords: }} #1}
\providecommand{\MSC}[1]{\textbf{\textit{Mathematics Subject Classification: }} #1}
\newcommand{\rank}{\mathsf{rank}}
\newcommand{\diag}{\mathsf{diag}}
\renewcommand{\Re}[1]{\mathsf{Re}(#1)}
\DeclareMathOperator*{\argmin}{arg\,min ~~}
\newcommand{\fro}[1]{\left\| #1\right\|^2}
\newcommand{\scal}[1]{\left \langle #1 \right \rangle}
\newcommand{\para}[1]{\left( #1\right)}
\newcommand{\R}{\mathcal{R}}
\newcommand{\bb}[1]{\boldsymbol{#1}}
\newtheorem{theorem}{Theorem}[section]
\newtheorem{lemma}[theorem]{Lemma}
\newtheorem{corollary}[theorem]{Corollary}
\newtheorem{definition}  [theorem] {Definition}
\newcommand{\Hankelset}{\mathcal{H}}
\newcommand{\dualproj}{\mathfrak{B}}
\newcommand{\runS}{s}
\newcommand{\runSh}{s^\dagger}
\newcommand{\U}{\mathsf{U}}
\newcommand{\soc}{\mathfrak{S}}
\def\MM{{\mathfrak{M}}}
\def\M{{\mathcal{M}}}
\newcommand{\I}{\mathscr{J}}
\newcommand{\J}{\mathscr{K}}
\renewcommand{\S}{\mathscr{T}}
\renewcommand{\R}{\mathscr{R}}
\newcommand{\G}{\mathscr{S}}
\title{Fixed-point algorithms for frequency estimation and structured low rank approximation}
\author{Fredrik Andersson} 
\author{Marcus Carlsson} 
\affil{Center for Mathematical Sciences, Lund University, Box 118, 22100 Lund, Sweden}
\date{} 
\begin{document}
\maketitle
\keywords {Fixed-point algorithms,  Frequency estimation. General domain Hankel matrices, Fenchel conjugate,  Convex envelope.}

\MSC{15B05,  65K10,  41A29,  41A63. }

\begin{abstract}
We develop fixed-point algorithms for the approximation of structured matrices with rank penalties. In particular we use these fixed-point algorithms for making approximations by sums of exponentials, or frequency estimation. For the basic formulation of the fixed-point algorithm we show that it converges to the minimum of  the convex envelope of the original objective function along with its structured matrix constraint. It often happens that this solution agrees with the solution to the original minimization problem, and we provide a simple criterium for when this is true. We also provide more general fixed-point algorithms that can be used to treat the problems of making weighted approximations by sums of exponentials given equally or unequally spaced sampling. We apply the method to the case of missing data, although optimal convergence of the fixed-point algorithm is not guaranteed in this case. However, it turns out that the method often gives perfect reconstruction (up to machine precision) in such cases. We also discuss multidimensional extensions, and illustrate how the proposed algorithms can be used to recover sums of exponentials in several variables, but when samples are available only along a curve.
\end{abstract}

\section{Introduction}

We consider the non-convex problem \begin{equation}\label{probrank}
\argmin_{A\in\Hankelset} \I_{F,\sigma_0}(A) = \argmin_{A\in\Hankelset} \sigma_0^2\rank(A) +\|A-F\|^2,
\end{equation}
where $A$ is an $M\times N$-matrix, $\Hankelset$ any linear subspace, $\sigma_0$ is a parameter and the norm refers to the Frobenius norm. The convex envelope of $\I_{F,\sigma_0}$  is then given by
\begin{equation} \label{probmod}
\R_{\sigma_0}(A) + \|A-F\|^2,
\end{equation}
where
$$
\R_{\sigma_0}(A) = \sum_j \sigma_0^2-\para{\max_{}\para{\sigma_0-\sigma_j(A),0}}^2,
$$
see \cite{larsson2015convex, larsson2014rank} for details.
We provide a fixed point algorithm which is guaranteed to solve the problem
\begin{equation}\label{probLagrange}
\argmin_{A\in \Hankelset} \R_{\tau} (A) +q \|A-F\|^2,
\end{equation}
for any $1<q<\infty$ and $\tau>0$. It turns out that the solution to \eqref{probLagrange} often coincides with the solution to the non-convex problem \eqref{probrank} for $\sigma_0=\tau/\sqrt{q}$, and we provide a simple condition to verify if this has happened. To be more precise, below is a compilation of Theorem \ref{buli} and Theorem \ref{thm:A_structure} in the particular case $q=2$ for the non-linear map $\dualproj_{F,\tau,q}$ defined by (\ref{dualproj_def}).


\begin{theorem}\label{introbuli}
Given any starting point $W^1,$ the Picard iteration $W^{n+1}= \dualproj_{F,\tau,2}(W^n)$ converges to a fixed point $W^{\circ}$.
Moreover, the orthogonal projection $\mathcal{P}_\Hankelset (W^{\circ})$ is unique and $$
A^{\circ}=2F-\mathcal{P}_\Hankelset (W^{\circ}),
$$
is the unique solution to \eqref{probLagrange}. Moreover, $W^{\circ}$ has the property that $A^{\circ}$ also solves \begin{align*}
&\argmin_{A}   \R_{\tau}\para{A} + \fro{ A-W^\circ}.
\end{align*}
and if $\sigma_j(W^\circ)$ has no singular values equal to $\tau$, then $A^{\circ}$ is the solution to the non-convex problem \eqref{probrank} with $\sigma_0=\tau/\sqrt{2}$.
\end{theorem}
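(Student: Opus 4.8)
The plan is to establish the theorem in several stages, corresponding to the distinct assertions in the statement. The core of the argument must be that the map $\dualproj_{F,\tau,2}$ is a \emph{firmly non-expansive} operator (or more precisely, an averaged operator built from the proximal operator of $\R_\tau$ and a reflection/linear map associated with the constraint set $\Hankelset$). First I would unpack the definition \eqref{dualproj_def}: I expect $\dualproj_{F,\tau,q}$ to be a Douglas--Rachford-type splitting of the two convex terms $\R_\tau$ and $q\|A-F\|^2$ over the subspace $\Hankelset$. The proximal map of the second (quadratic, constrained) term is an affine operator involving the orthogonal projection $\mathcal{P}_\Hankelset$, and the proximal map of $\R_\tau$ is given explicitly by singular-value soft-thresholding since $\R_\tau$ is a spectral function built from the scalar penalty $\sigma^2 - (\max(\sigma-\tau,0))^2$. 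Verifying that each piece is non-expansive and that their composition/average is averaged is the analytic heart; convergence of the Picard iteration $W^{n+1}=\dualproj_{F,\tau,2}(W^n)$ to a fixed point $W^\circ$ then follows from the standard Krasnoselskii--Mann / Opial fixed-point theorem for averaged operators on the finite-dimensional Hilbert space of matrices.

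Next I would translate the fixed point into the minimizer. The key identity to prove is that $W^\circ$ being a fixed point of the splitting map is equivalent to the first-order optimality (subgradient inclusion) for the strongly convex problem \eqref{probLagrange}. Because $1<q<\infty$ makes the objective $\R_\tau(A)+q\|A-F\|^2$ strictly convex in $A$, the minimizer $A^\circ$ is unique, which gives uniqueness of $A^\circ$ and hence uniqueness of $\mathcal{P}_\Hankelset(W^\circ)$ even though $W^\circ$ itself need not be unique off the subspace. I would then derive the explicit reconstruction formula $A^\circ = 2F - \mathcal{P}_\Hankelset(W^\circ)$ by reading off the reflection step of the splitting at the fixed point. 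The companion statement, that $A^\circ$ also solves $\argmin_A \R_\tau(A)+\|A-W^\circ\|^2$, should drop out of the same optimality characterization: at the fixed point the shadow point $W^\circ$ is precisely the input for which the unconstrained proximal problem returns $A^\circ$, so this is essentially the definition of one of the two proximal steps evaluated at convergence.

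The last clause is where I expect the main obstacle. I must connect the solution of the convex surrogate \eqref{probLagrange} back to the \emph{non-convex} rank problem \eqref{probrank} with $\sigma_0=\tau/\sqrt{2}$. Since $\R_{\sigma_0}+\|A-F\|^2$ is the convex envelope of $\I_{F,\sigma_0}$, a minimizer of the envelope coincides with a minimizer of the original functional exactly when the envelope is \emph{tight} at that point, i.e.\ when no convexification gap is incurred. The condition that $W^\circ$ has no singular value equal to $\tau$ is the non-degeneracy hypothesis ensuring that the singular-value thresholding operator acts in a ``pure'' way---each singular value is either strictly shrunk to zero or left essentially untouched, so that the proximal output $A^\circ$ has a well-defined rank and sits at a point where envelope and original objective agree. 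I would make this precise by invoking the characterization of equality between $\I_{F,\sigma_0}$ and its envelope from \cite{larsson2015convex, larsson2014rank}, showing that the absence of singular values exactly at the threshold $\tau$ rules out the flat (affine) regions of the envelope where the gap lives; the factor $1/\sqrt{2}$ arises precisely from matching the quadratic weight $q=2$ against the scaling of the threshold in the scalar penalty. The delicate point to get right will be tracking how the threshold $\tau$ in $\R_\tau$ corresponds to $\sigma_0=\tau/\sqrt{q}$ in the rank functional under the weight $q$, and confirming that the no-singular-value-at-$\tau$ condition is genuinely sufficient (not merely necessary) for the non-convex optimum to be attained at $A^\circ$.
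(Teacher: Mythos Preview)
Your high-level architecture is sound: establish that the iteration map is averaged, invoke Browder/Opial for convergence, use strict convexity of $\R_\tau(A)+q\|A-F\|^2$ to get uniqueness of $A^\circ$ (hence of $\mathcal{P}_\Hankelset(W^\circ)$), and close by arguing that the convex envelope is tight at $A^\circ$ when no singular value of $W^\circ$ sits on the threshold. The last paragraph in particular matches the paper's reasoning in Section~\ref{secdiscussion} and Theorem~\ref{thm:A_structure}.

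However, your identification of the operator $\dualproj_{F,\tau,q}$ is off in a way that would derail the proof. By definition \eqref{dualproj_def}, $\dualproj(W)=\soc_{\runS_{\tau,q}}\bigl(qF+\mathcal{P}_{\Hankelset^\perp}(W)\bigr)$, and by Lemma~\ref{lemma:runS} the map $\soc_{\runS}$ is the proximal operator of (a scaling of) $\S_\tau$, \emph{not} of $\R_\tau$. These are different objects: the prox of $\R_\tau$ at scale $1$ is essentially \emph{hard} thresholding at $\tau$ (this is exactly the computation in \eqref{AW_sigma_derivation_relation}), whereas $\runS$ is the capped piecewise-linear function in \eqref{runS}. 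So the picture ``Douglas--Rachford on the primal with prox of $\R_\tau$ as soft-thresholding'' does not match the actual map, and a direct firm non-expansiveness argument for $\dualproj$ itself does not obviously go through.

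The paper instead works on the dual side. It introduces an auxiliary variable $B$, forms $\J_{F,\tau,q}(A,B)=\tau^2\rank(A)+p\|A-B\|^2+q\|B-F\|^2$, and computes $\J^*$ and $\J^{**}$ explicitly (Theorem~\ref{thm:bi_conjugate}). Existence of a fixed point of $\dualproj$ is obtained by showing it solves the dual problem \eqref{daproblemHankel2} (Theorem~\ref{uli}). Convergence is proved not for $\dualproj$ but for $\mathcal{P}_{\Hankelset^\perp}\dualproj$ restricted to $\Hankelset^\perp$: one checks that $\runSh(\sigma)=2\runS(\sigma)-\sigma$ is $1$-Lipschitz, whence $2\mathcal{P}_{\Hankelset^\perp}\dualproj-I$ is non-expansive on $\Hankelset^\perp$ via \eqref{lip}, so $\mathcal{P}_{\Hankelset^\perp}\dualproj$ is firmly non-expansive there and Browder applies. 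The formula $A^\circ=\frac{1}{q-1}(qF-\mathcal{P}_\Hankelset(W^\circ))$ is then read off by unwinding the dual variables \eqref{XY-formulas} back to the primal (equations \eqref{argminAB}--\eqref{A_W_solution}), and the statement that $A^\circ$ minimizes $\R_\tau(A)+\|A-W^\circ\|^2$ comes from substituting the optimal $B$ into \eqref{argminAB} and completing the square. If you want to salvage your approach, you would need either to recast $\dualproj$ correctly as a splitting on the dual (where $\S_\tau$, not $\R_\tau$, is the relevant function) or to imitate the paper's direct Lipschitz computation on $\runSh$.
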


To provide more flexibility in choosing the objective functional, we also consider the problem of minimizing
\begin{equation}\label{probequ}
\R_{\tau}(A) + q \fro{\MM A-h}_\mathcal{V},\quad A\in\Hankelset,
\end{equation}
where
$\mathfrak{M}:\Hankelset\rightarrow \mathcal{V}$ is a linear operator into any given linear space. We give conditions under which the fixed points of a certain non-linear operator coincide with the global minimum of this problem.

We apply these algorithms to the problem of approximation by sparse sums of exponentials. 
Let $f:\mathbb{R}\rightarrow \mathbb{C}$ be the function that we wish to approximate, and assume that it is sampled at points $x=x_j$. The problem can then be phrased as
\begin{equation}\label{Hankel_ell2_org}
\argmin_{\{c_k\},\{\zeta_k\}, K} \sigma_0^2 K + \sum_{j=1}^J \mu_j \left| \sum_{k=1}^K c_k e^{ 2 \pi i x_j \zeta_k} - f(x_j) \right|^2, \quad c_k,\zeta_k \in \mathbb{C}, \quad \mu_j \in \mathbb{R}^+.
\end{equation}
Here, the parameter $\sigma_0$ is a parameter that penalizes the number of exponential functions. The weights $\mu_j$ can be used to control the confidence levels in the samples $f(x_j)$.

This approximation problem is closely related to the problem of frequency estimation, i.e., the detection of the (complex) frequencies $\zeta_k$ above. Once the frequencies are known, the remaining approximation problem becomes linear and easy to solve by means of least squares. The literature on frequency estimation is vast. The first approach was made already in 1795 \cite{prony}. Notable methods that are commonly used are ESPRIT \cite{ESPRIT}, MUSIC \cite{MUSIC} and matrix pencil methods \cite{pencil}. We refer to \cite{stoica2005spectral} for an overview from a signal processing perspective. From an analysis point of view, the AAK-theory developed in \cite{adamjan1971analytic,adamyan1968infiniteI,adamyan1968infiniteR} deals with closely related problems, and approximation algorithms based on these results were developed in \cite{andersson2011sparse,beylkin2005approximation}. A competitive algorithm in the case of fixed $K$ was recently developed in \cite{actwIEEE}.

To approach this problem, we let $\Hankelset$ denote the set of Hankel matrices, i.e., if $A\in \Hankelset$, then there is a generating function $f$ such that $A(j,k)=f(j+k)$. Hankel matrices are thus constant along the anti-diagonals. By the Kronecker theorem there is a connection between a sums of $K$ exponential functions and Hankel matrices with rank $K$, namely that if $f$ is a linear combination of $K$ exponential functions sampled at equally spaced points, then the Hankel matrix generated by these samples will generically have rank $K$. For more details, see e.g. \cite{IEOT,acCar,kronecker,rochberg}.

In the special case with equally spaced sampling, i.e. $x_j=j$, and
\begin{equation}\label{triangle_weight}
\mu_j=\begin{cases}
j & \text{if $1\le j \le N+1$} \\
2N+2-j & \text{if $N+1 < j \le 2 N+1$}
\end{cases},
\end{equation}
the problem (\ref{Hankel_ell2_org}) (with $F\in\Hankelset$ being the Hankel matrix generated from $f$) is equivalent with \eqref{probmod}. The triangular weight above comes from counting the number of elements in the Hankel matrix along the anti-diagonals. Theorem \ref{introbuli} thus provide approximate solutions to this problem, which often turn out to solve the original problem \eqref{Hankel_ell2_org} as well, which in this setting is the equivalent of \eqref{probrank}.

Clearly, for many purposes it would be more natural to use the regular $\ell^2$ norm in the approximation, i.e., to use constant values of $\mu_j$. Also more general setups of weights can be of interest (for instance in the case of missing data). We show how the fixed-point algorithm designed for (\ref{probequ}) can be used in this case.


The basic algorithms for frequency estimation (or approximations by sums of exponentials) require that the data is provided (completely) at equally spaced samples. In many situations, this assumption does not hold (in particular concerning sampling in two or more variables). A traditional method designed for computing periodograms for unequally spaced sampling is the Lomb-Scargle method \cite{Lomb1976,scargle1982studies}. The results are typically far from satisfactory, cf. \cite{actwIEEE}. For a review of frequency estimation methods for unequally spaced sampling, see \cite{babu2010spectral}.
In this work, we will use similar ideas as in \cite{actwIEEE} for treating the unequally spaced sampling.  Given a set of sample points $X = \{x_j\}$, let $\mathcal{I}_X$ be a matrix that interpolates between equally spaced points and the points $x_j$. This means for instance that the rows of $\mathcal{I}_X$ have sum one. The action of its adjoint $\mathcal{I}_X^\ast$ is sometimes referred to as \emph{anterpolation}. We study the problem
\begin{equation}\label{ijkll}
\argmin_{A(j,k)=a(j+k)} \sigma_0^2 \rank(A) +  \| ( f-I_X(a)) \|^2.
\end{equation}
using the algorithm for (\ref{probequ}), and we also modify \eqref{ijkll} to deal with the corresponding weighted case.

Finally, we discuss how the methods can be used in the multidimensional case. Let $\Omega\subset \mathbb{R}^d$ be an open bounded connected set and $f$ a function on $\Omega$ which is assumed to be of the form \begin{equation}\label{fgb}f(x)=\sum_{k=1}^K c_ke^{\zeta_k \cdot x},\end{equation}
where $\zeta_k\in\mathbb{C}^d$, $\zeta_k\cdot x=\sum_{j=1}^d\zeta_{k,j}x_j$ and $K$ is a low number. In \cite{IEOT} we introduce a new class of operators called general domain truncated correlation operators, whose ``symbols'' are functions on $\Omega$, and prove a Kroncker-type theorem for these operators. In particular it follows that their rank is $K$ if the symbol is of the form \eqref{fgb}, and the converse situation is also completely clarified. In \cite{acCar} it is explained how these operators can be discretized giving rise to a class of structured matrices called ``general domain Hankel matrices'', (see Section \ref{secmultsfe} for more details). Letting $\Hankelset$ be such a class of structured matrices, it turns out that \eqref{probLagrange} is a natural setting for solving the problem: Given noisy measurements of $f$ at (possibly unequally spaced) points in $\Omega$, find the function $f$. The connected problem of finding the values $\{c_k\}$ and $\{\zeta_k\}$ is addressed e.g. in \cite{ACdeHExp2dI}.

The paper is organized as follows: In Section \ref{sec2} we provide some basic ingredients for the coming sections, in particular we introduce the singular value functional calculus and connected Lipschitz estimates. Our solution of \eqref{probLagrange} involves an intermediate more intricate objective functional, which is introduced in Section \ref{secconvex}, where we also investigate the structure of its Fenchel conjugates. Moreover our solution to \eqref{probLagrange} first solves a dual problem, which is introduced and studied in Section \ref{secdual}. Consequently the key operator $\dualproj_{F,\tau,q}$ is introduced in Section \ref{secdual}. At this point, we have the ingredients necessary for addressing the main problem \eqref{probLagrange}. In Section \ref{secdiscussion} we recapitulate our main objective and discuss how various choices of $q$ and $\tau$ affect the objective functional and also its relation to the original problem \eqref{probrank}. An extended version of Theorem \ref{introbuli} is finally given in Section \ref{secthealg}, where we also present our algorithm for solving \eqref{probLagrange}. The more general version of the algorithm and corresponding theory is given in Section \ref{secgener}. The remainder of the paper is devoted to frequency estimation and numerical examples. Section \ref{seccfe} considers the one-dimensional problem \eqref{ijkll}, whereas the problem of retrieving functions of the form \eqref{fgb} is considered in Section \ref{secmultsfe}. This section also contains a more detailed description of the general domain Hankel matrices which underlies the method. Numerical examples are finally given in Section \ref{secnum} and we end with a summary of our conclusions in Section \ref{secconcl}.

\section{Preliminaries}\label{sec2}
\subsection{Fenchel conjugates and convex optimization}\label{secnew}
We recall that the Fenchel conjugate $f^\ast$ of a function $f$ is given by
$$f^\ast(y)=\max_{x}\quad  \scal{x,y} - f(x).$$
For positive functions $f$, $f^{\ast}$ is always convex, and if $f$ is also convex, then $f=f^{\ast\ast}$, see Theorem 11.1 in \cite{rockafellarwets}. If $f$ is not convex, then $f^{\ast\ast}$ equals the convex envelope of $f$. In particular, note that we always have $f^*=f^{***}$.

Suppose now that $f$ is a convex function on some finite dimensional linear space $\mathcal{V}$, and let $\M$ be a linear subspace. Consider the problem of finding
\begin{equation}\label{Hankelorg2}\begin{aligned}
&\min f(x)\\
&\text{subj. to } x\in \M.
\end{aligned}\end{equation}
If $y\in \M^{\perp}$ then the value in \eqref{Hankelorg2} is clearly larger than or equal to the value
\begin{equation}\label{Dlambda}
  \min_{x} f(x)- \langle y,x\rangle=-f^{\ast}(y).
\end{equation}
Suppose now that we can find a value $y^{\circ}$ (not necessarily unique) which minimizes
\begin{equation}\label{Hankelorg1}\begin{aligned}
&\min f^*(y)\\
&\text{subj. to } y\in \M^\perp.
\end{aligned}\end{equation}
Since $f$ is convex, it is easy to see that the corresponding value in \eqref{Dlambda} equals that of \eqref{Hankelorg2}.

\subsection{The singular value functional calculus}\label{secsingval}
Let $M,N\in\mathbb{N}$ be given and let $\mathbb{M}_{M,N}$ denote the Euclidean space of $M\times N$-matrices, equipped with the Frobenius norm. We will need the singular value functional calculus \cite{andcarper}, defined as follows. Let $f$ be a real valued function on $\mathbb{R}_+$ such that $f(0)=0$ and let $A\in\mathbb{M}_{M,N}$ be a matrix with singular value decomposition $A=U  \diag(\{ \sigma_j(A) \}) V^\ast$. We then define $$\soc_f(A)=U  \diag(\{ f(\sigma_j(A)) \}) V^\ast.$$
The key property we will use is that
\begin{equation}\label{lip}\|\soc_f(A)-\soc_f(B)\|\leq\|f\|_{Lip}\|A-B\|,\end{equation}
where $\|f\|_{Lip}$ refers to the Lipschitz constant for $f$. In other words, Lipschitz functions give rise to Lipschitz functional calculus, with the same bound.

\section{Convex envelopes}\label{secconvex}
For a given matrix $A$, let
$$
\S_{\sigma_0}(A) = \sum_j \max_{}\para{\sigma_j^2(A)-\sigma_0^2,0},
$$
and
$$
\R_{\sigma_0}(A) = \sum_j \sigma_0^2-\para{\max_{}\para{\sigma_0-\sigma_j(A),0}}^2.
$$
For a proof of the following theorem, see \cite{larsson2015convex, larsson2014rank}.

\begin{theorem}\label{kalle} Let $F$ be a fixed matrix, and let
\begin{equation}\label{Ifun0}
\I_{F,\sigma_0}(A) = \sigma_0^2 \rank(A)+ \fro{A-F}.
\end{equation}
The Fenchel conjugates of $\I_{F,\sigma_0}$ are then
\begin{align}
\I_{F,\sigma_0}^\ast (X) &= \S_{\sigma_0}\para{\frac{X}{2}+F} -\fro{F}  \label{Ifun1}, \\
\I_{F,\sigma_0}^{\ast \ast} (A) &= \R_{\sigma_0}(A)+ \fro{A-F} \label{Ifun2}.
\end{align}
\end{theorem}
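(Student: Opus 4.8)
The theorem states Fenchel conjugates of a rank-penalized function. Let me think through the structure.\textbf{Plan of proof.} The claim computes the two Fenchel conjugates of $\I_{F,\sigma_0}$. The plan is to compute $\I_{F,\sigma_0}^\ast$ directly from the definition and then obtain $\I_{F,\sigma_0}^{\ast\ast}$ either by a second direct computation or by invoking the general identity $f^{\ast\ast}=\R_{\sigma_0}+\|\cdot-F\|^2$ as the convex envelope. First I would expand the defining maximization
\begin{equation}
\I_{F,\sigma_0}^\ast(X)=\max_A\ \scal{X,A}-\sigma_0^2\rank(A)-\fro{A-F},
\end{equation}
and complete the square in $A$ to absorb the inner product: writing $\scal{X,A}-\fro{A-F}=-\fro{A-(F+X/2)}+\scal{X,F}+\frac14\fro{X}$, the problem reduces to maximizing $-\sigma_0^2\rank(A)-\fro{A-(F+X/2)}$ up to the additive constant $\scal{X,F}+\|X/2\|^2$. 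The key step is then to recognize that for fixed rank $r$ the optimal $A$ is the best rank-$r$ approximation of $G:=F+X/2$, so by the Eckart--Young theorem the residual $\fro{A-G}$ equals $\sum_{j>r}\sigma_j^2(G)$; optimizing over $r$ becomes a term-by-term decision, keeping a singular value $\sigma_j(G)$ precisely when $\sigma_j^2(G)>\sigma_0^2$, which yields exactly $\S_{\sigma_0}(G)$ after collecting the constants into $-\fro{F}$.

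The main obstacle is making the reduction to a term-by-term (per-singular-value) optimization fully rigorous. The difficulty is that $\scal{X,A}$ is maximized over all $A$, not over matrices sharing singular vectors with $G$, so I cannot immediately diagonalize; the clean tool here is the von Neumann trace inequality, which bounds $\scal{X,A}$ in terms of the ordered singular values of $X$ and $A$ and shows the maximum is attained when $A$ shares the singular subspaces of $G$. Once the problem is aligned along a common singular system, the rank penalty decouples across coordinates and the scalar optimization $\max_{t}\ -\sigma_0^2\,[t\neq 0]-(t-\sigma_j(G))^2$ gives each summand of $\S_{\sigma_0}$. This is precisely the content borrowed from the singular value functional calculus of Section \ref{secsingval}, so I would lean on \eqref{lip} and the framework of \cite{andcarper} to justify the alignment cleanly.

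For the second conjugate, I would prefer to avoid recomputing a maximization and instead argue structurally. Since $\I_{F,\sigma_0}$ is a nonnegative function, $\I_{F,\sigma_0}^{\ast\ast}$ is its convex envelope, and the convex envelope of $\sigma_0^2\rank(A)+\fro{A-F}$ is known from \cite{larsson2015convex,larsson2014rank} to be $\R_{\sigma_0}(A)+\fro{A-F}$; this gives \eqref{Ifun2} immediately. Alternatively, to keep the argument self-contained, I would compute $\I_{F,\sigma_0}^{\ast\ast}(A)=(\S_{\sigma_0}(\cdot/2+F)-\fro{F})^\ast(A)$ by the same von Neumann alignment, reducing it to the scalar Fenchel conjugate of $t\mapsto\max(t^2-\sigma_0^2,0)$ and verifying that this scalar transform produces the summand $\sigma_0^2-(\max(\sigma_0-\sigma_j(A),0))^2$. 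I expect the scalar computation to be routine once the alignment is established; the genuine work is concentrated in the matrix-to-scalar reduction, which I would present once and reuse for both parts.
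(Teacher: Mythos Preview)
The paper does not actually supply its own proof of this theorem; it simply refers the reader to \cite{larsson2015convex,larsson2014rank}. Your proposal is therefore already more detailed than what appears here, and the approach you outline is correct and standard: complete the square to recenter the problem at $G=F+X/2$, use Eckart--Young to identify the optimal rank-$r$ approximant for each fixed $r$, and then optimize over $r$ term by term, keeping $\sigma_j(G)$ precisely when $\sigma_j(G)>\sigma_0$; collecting the additive constants yields $\S_{\sigma_0}(G)-\fro{F}$. For the biconjugate, your preferred option of invoking the convex-envelope identification from \cite{larsson2015convex,larsson2014rank} is exactly what the paper does for the entire statement, so there is nothing to compare.

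Two small remarks on the write-up. First, once the square is completed the only $A$-dependence is through $\rank(A)$ and $\fro{A-G}$, so Eckart--Young already delivers the alignment with the singular vectors of $G$; invoking von Neumann separately is not wrong, but it is redundant at that stage. Second, the pointer to \eqref{lip} is misplaced: that inequality concerns Lipschitz continuity of the singular value functional calculus, not the diagonal-reduction argument you need. The relevant tool is the von Neumann trace inequality itself, used in the paper for instance in the proof of Lemma~\ref{lemma:runS} and Theorem~\ref{thm:A_structure}.
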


Let $\tau>0$ and let $1<p,q<\infty$ be conjugate exponents, i.e. such that $1/p+1/q=1$.

\begin{theorem} \label{thm:bi_conjugate}
Let
$$
\J_{F,\tau,q}(A,B)= \tau^2 \rank(A)  + p\fro{(A-B)}+ q\fro{(B-F)}.
$$
It's Fenchel conjugate is then given by
\begin{align} \label{Jast}
&\J_{F,\tau,q}^{\ast}(X,Y)=\S_{\tau}\para{\frac{X}{2}-\frac{Y}{2q}+F}+ {(q-1)}\fro{\frac{Y}{2q}-F}-q \fro{F}.
\end{align}
and its convex envelope (Fenchel bi-conjugate) is given by
\begin{align} \label{Jastast}
\J_{F,\tau,q}^{\ast \ast}(A,B) = \R_{\tau} \para{A} + p \fro{A-B}+ q \fro{B-F},
\end{align}
\end{theorem}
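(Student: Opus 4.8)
The plan is to obtain the conjugate \eqref{Jast} by partial maximization, eliminating $B$ first and then recognizing the residual $A$-problem as the conjugate already computed in Theorem \ref{kalle}. Writing out the definition, $\J_{F,\tau,q}^\ast(X,Y)=\max_{A,B}\ \scal{A,X}+\scal{B,Y}-\tau^2\rank(A)-p\fro{A-B}-q\fro{B-F}$, I would hold $A$ fixed and maximize the quadratic over $B$. Completing the square gives $p\fro{A-B}+q\fro{B-F}=(p+q)\fro{B-B_0}+\frac{pq}{p+q}\fro{A-F}$ with $B_0=(pA+qF)/(p+q)$, and the decisive point is that the conjugate-exponent hypothesis $1/p+1/q=1$ forces $pq/(p+q)=1$, so the residual is exactly $\fro{A-F}$ with unit coefficient.

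After the $B$-maximization — a standard scalar computation contributing a term $\scal{B_0,Y}$ together with a multiple of $\fro{Y}$ — the surviving optimization over $A$ is $\max_A \scal{A,\,X-\tfrac1q Y}-\tau^2\rank(A)-\fro{A-F}$, where $p/(p+q)=1/q$. This is precisely $\I_{F,\tau}^\ast$ evaluated at the affine shift $X-\tfrac1q Y$, so Theorem \ref{kalle} turns it into the $\S_\tau\para{\frac X2-\frac{Y}{2q}+F}-\fro{F}$ piece of \eqref{Jast}. Folding the leftover $Y$-quadratic (using $1/p=(q-1)/q$) into $(q-1)\fro{\frac{Y}{2q}-F}-q\fro{F}$ is then routine algebra.

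For the bi-conjugate \eqref{Jastast} I would avoid a second brute-force conjugation and instead argue via Fenchel--Moreau. Let $G(A,B)=\R_\tau(A)+p\fro{A-B}+q\fro{B-F}$ be the claimed envelope. First I would verify that $G$ is convex: the same square-completion rewrites $G(A,B)=\I_{F,\tau}^{\ast\ast}(A)+\frac{1}{p+q}\fro{(p+q)B-pA-qF}$, in which the first summand is convex (being a bi-conjugate, by Theorem \ref{kalle}) and the second is the squared norm of an affine map, hence jointly convex. Next I would compute $G^\ast$ by the identical two-step elimination; the only change is that the $A$-maximization now produces $(\I_{F,\tau}^{\ast\ast})^\ast$ at the shifted argument. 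By the general identity $f^\ast=f^{\ast\ast\ast}$ recorded in Section \ref{secnew}, $(\I_{F,\tau}^{\ast\ast})^\ast=\I_{F,\tau}^{\ast\ast\ast}=\I_{F,\tau}^\ast$, so the $\S_\tau$ term and the residual $Y$-quadratic emerge exactly as before and $G^\ast=\J_{F,\tau,q}^\ast$. Since $G$ is convex and lower semicontinuous, Fenchel--Moreau yields $G=G^{\ast\ast}=(\J_{F,\tau,q}^\ast)^\ast=\J_{F,\tau,q}^{\ast\ast}$, which is \eqref{Jastast}.

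The main obstacle is bookkeeping rather than conceptual: one must check that eliminating $B$ genuinely leaves a unit-weight $\fro{A-F}$ — this hinges entirely on the conjugate-exponent relation — and that the surviving quadratic terms in $Y$ collapse to the compact expression in \eqref{Jast}. The one genuine subtlety is the convexity of $G$ needed for Fenchel--Moreau; the decomposition above, which again relies on $pq/(p+q)=1$, is what makes this transparent and lets the bi-conjugate follow from the single-variable identity $f^\ast=f^{\ast\ast\ast}$ rather than from a direct recomputation.
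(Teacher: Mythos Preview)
Your derivation of $\J^\ast$ follows essentially the same route as the paper: eliminate $B$ first by completing the square, use the conjugate-exponent identity $pq/(p+q)=1$ to reduce the residual weight to $1$, and then recognise the remaining $A$-maximization as an instance of $\I^\ast$ from Theorem~\ref{kalle}. (One bookkeeping slip: the $B$-elimination produces $\scal{A,\,X+\tfrac1q Y}$, not $X-\tfrac1q Y$; the printed statement carries a sign typo, and the paper's own proof works with the plus sign throughout.)

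For the bi-conjugate your approach is genuinely different. The paper computes $\J^{\ast\ast}$ directly from the definition: it writes out $\max_{X,Y}\,\scal{A,X}+\scal{B,Y}-\J^\ast(X,Y)$, performs an affine change of variables $(X,Y)\mapsto(C,D)$ to decouple the problem, maximizes the quadratic in $D$, and then recognises the surviving $C$-maximization as $\I^{\ast\ast}$ evaluated at a shifted center --- followed by a page of algebraic simplification. Your argument instead posits the target $G(A,B)=\R_\tau(A)+p\fro{A-B}+q\fro{B-F}$, verifies its joint convexity via the same square-completion (rewriting $G=\I_{F,\tau}^{\ast\ast}(A)+(p+q)\fro{B-B_0}$), and observes that the $B$-elimination already carried out for $\J^\ast$ applies verbatim to $G^\ast$, with $\I_{F,\tau}^{\ast\ast}$ in place of $\I_{F,\tau}$; since $(\I^{\ast\ast})^\ast=\I^{\ast\ast\ast}=\I^\ast$, this yields $G^\ast=\J^\ast$ immediately, and Fenchel--Moreau closes the loop. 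This is the more economical argument: it recycles the first computation rather than performing a second independent one, and it makes explicit that all of the content lies in the single-variable Theorem~\ref{kalle}. The paper's direct computation has the compensating virtue of being self-contained --- no separate convexity verification is needed --- but your route is sharper.
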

\begin{proof}
We write simply $\J$ when ${F,\tau,q}$ are clear from the context. Consider the Fenchel conjugate of $\J$
\begin{align*}
&\J^\ast(X,Y) = \max_{A,B} \scal{A,X}+\scal{B,Y} - \J(A,B) =\\
& \max_A \scal{A,X} - \tau^2 \rank{A} + \max_B \para{\scal{B,Y} -   p\fro{(A-B)}- q\fro{(B-F)}}.
\end{align*}
The right maximum is attained for
$$
B=\frac{p}{p+q}A + \frac{q}{p+q}F +\frac{1}{2(p+q)}Y.
$$
After substituting $B$ in the Fenchel conjugate and some algebraic simplifications, we obtain
\begin{align}  \label{first_fenchel_XY}
\J^\ast(X,Y) &= \nonumber \max_A \scal{A,X} - \tau^2 \rank{A} - \frac{pq}{p+q}\fro{A- \para{ \frac{Y}{2q}+F}} + q\fro{\frac{Y}{2q}+F}-q \fro{F}  \\
&= \para{ \max_A \scal{A,X} - \tau^2 \rank{A} -\fro{A-\para{\frac{Y}{2q}+F}} }+  q\fro{\frac{Y}{2q}+F}-q \fro{F}.\nonumber
\end{align}
From (\ref{Ifun1}) we see that the expression inside the first parenthesis can be written as
$$
\S_{\tau}\para{\frac{X}{2} + \frac{Y}{2q}+F}-\fro{\frac{Y}{2q}+F},
$$
from which (\ref{Jast}) follows.

Next we study the bi-conjugate
$$
\J^{\ast \ast}(A,B) = \max_{X,Y} \scal{A,X} + \scal{B,Y} - \J^{\ast}(X,Y).
$$
To simplify the computations we change coordinates
\begin{equation}\label{1}
\begin{cases}
C=\frac{X}{2}+\frac{Y}{2q} +F\\
D=\frac{Y}{2q}+F.
\end{cases}
\end{equation}
for which
$$
\begin{cases}
X=2(C-D)\\
Y=2q(D-F).
\end{cases}
$$
The bi-conjugate can then be written as
\begin{align*}
\J^{\ast \ast}(A,B) &= \max_{C,D} 2\scal{A,C-D} + 2q\scal{B,D-F} - \J^{\ast}\para{2(C-D),2q(D-F)}\\
& =\max_{C,D} ~~2\scal{A,C-D} + 2q\scal{B,D-F}- \S_{\tau}\para{C}-(q-1) \fro{D} +q \fro{F}=\\
& =\max_{C}\para{  \scal{A,2C} -\S_{\tau}\para{C}}  + \max_D \para{    2\scal{D,qB-A}-(q-1)\fro{D} }+ q\fro{F}+2q \scal{B,F}.
\end{align*}
The maximization over $D$ is straightforward and it will give a contribution $\frac{\fro{qB-A}}{q-1}$ for $D=\frac{qB-A}{q-1}$. Hence $C=\frac{X}{2}+\frac{qB-A}{q-1}$ by \eqref{1}, and
\begin{align*}
\J^{\ast \ast}(A,B)&= \max_{X}\para{  \scal{A,X}+\frac{2}{q-1}\scal{A,qB-A}  - \S_{\tau}\para{\frac{X}{2}+\frac{qB-A}{q-1}}} \\ &\quad \quad \quad \quad \quad +\frac{\fro{qB-A}}{q-1} + q \fro{F}-2q \scal{B,F} \\  &=\max_{X}\Big(  \scal{A,X}  - \I_{\frac{{qB-A}}{q-1},\tau}^\ast\para{X} \Big)+\frac{q-2}{(q-1)^2}\fro{qB-A}\\ &\quad \quad \quad \quad \quad +\frac{2}{q-1}\scal{A,qB-A} + q \fro{F}-2q \scal{B,F}.
\end{align*}
We recognize the maximization part as a Fencel conjugate, and hence
\begin{align*}
\J^{\ast \ast}(A,B)&=
 \I_{\frac{{qB-A}}{q-1},\tau}^{\ast\ast}\para{A}\!  +\!\frac{q\!-\!2}{(q\!-\!1)^2}\fro{qB-A} \!+\!\frac{2}{q\!-\!1}\scal{A,qB\!-\!A} \!+\! q \fro{F}\!-\!2q \scal{B,F}\\
&=\R_{\tau}(A)\!+\! \Big\| A\!-\!\frac{{qB\!-\!A}}{q\!-\!1} \Big\|  +\frac{q\!-\!2}{(q\!-\!1)^2}\fro{qB\!-\!A}\! +\!\frac{2}{q\!-\!1}\scal{A,qB\!-\!A} \!+\! q \fro{F}\!-\!2q \scal{B,F} \\ &= \R_{\tau}(A) +\frac{q}{q-1}\fro{A-B}+q\fro{B-F}.  \qedhere
\end{align*}
\end{proof}

\section{The dual optimization problem}\label{secdual}
Let $\Hankelset\subset \mathbb{M}_{M,N}$ be any fixed linear subspace.
We consider the problem
\begin{equation}\label{daproblemHankel}\begin{aligned}
&\argmin \J^{\ast \ast}(A,B) = \R_{\tau}(A)+ p \fro{A-B}+q \fro{B-F}\\
&\text{subj. to } A=B, B\in\Hankelset,
\end{aligned}\end{equation}
where $F\in\Hankelset$.
Let $\M$ be the linear subspace of $\mathbb{M}_{M,N}\times \mathbb{M}_{M,N}$ defined by the equations $A=B$ and $B\in\Hankelset$. It is easy to see that $\M^\perp$ is defined by $$(X,Y)\in \M^\perp \Leftrightarrow X+Y\in \Hankelset^{\perp}.$$
Indeed, $\Leftarrow$ is obvious and the other part can be established by noting that $\dim \M=\dim\Hankelset$ whereas the subspace defined by $X+Y\in \Hankelset^{\perp}$ has dimension $\dim \Hankelset^{\perp}+\dim \mathbb{M}_{M,N}$, (which thus equals $\dim\mathbb{M}_{M,N}\times \mathbb{M}_{M,N}-\dim\M$). Motivated by the results in Section \ref{secnew}, we now focus on the ``dual problem'' to \eqref{daproblemHankel}, i.e.
\begin{equation}\label{daproblemHankel2}
\begin{aligned}
&\argmin \J^*(X,Y) \\
&\text{subj. to } X+Y\in\Hankelset^{\perp}
\end{aligned}
\end{equation}
Set 
\begin{equation}\label{runS}
\runS_{\tau,q}(\sigma)= \max\left( \min\left( \sigma,\tau\right),\frac{\sigma}{q}\right).
\end{equation}
When there is no risk for confusion we will omit the subindices. Recall the singular value functional calculus introduced in Section \ref{secsingval}.

\begin{figure}
\begin{center}
\includegraphics[width=0.5\linewidth,trim=6in 5in 6in 5in]{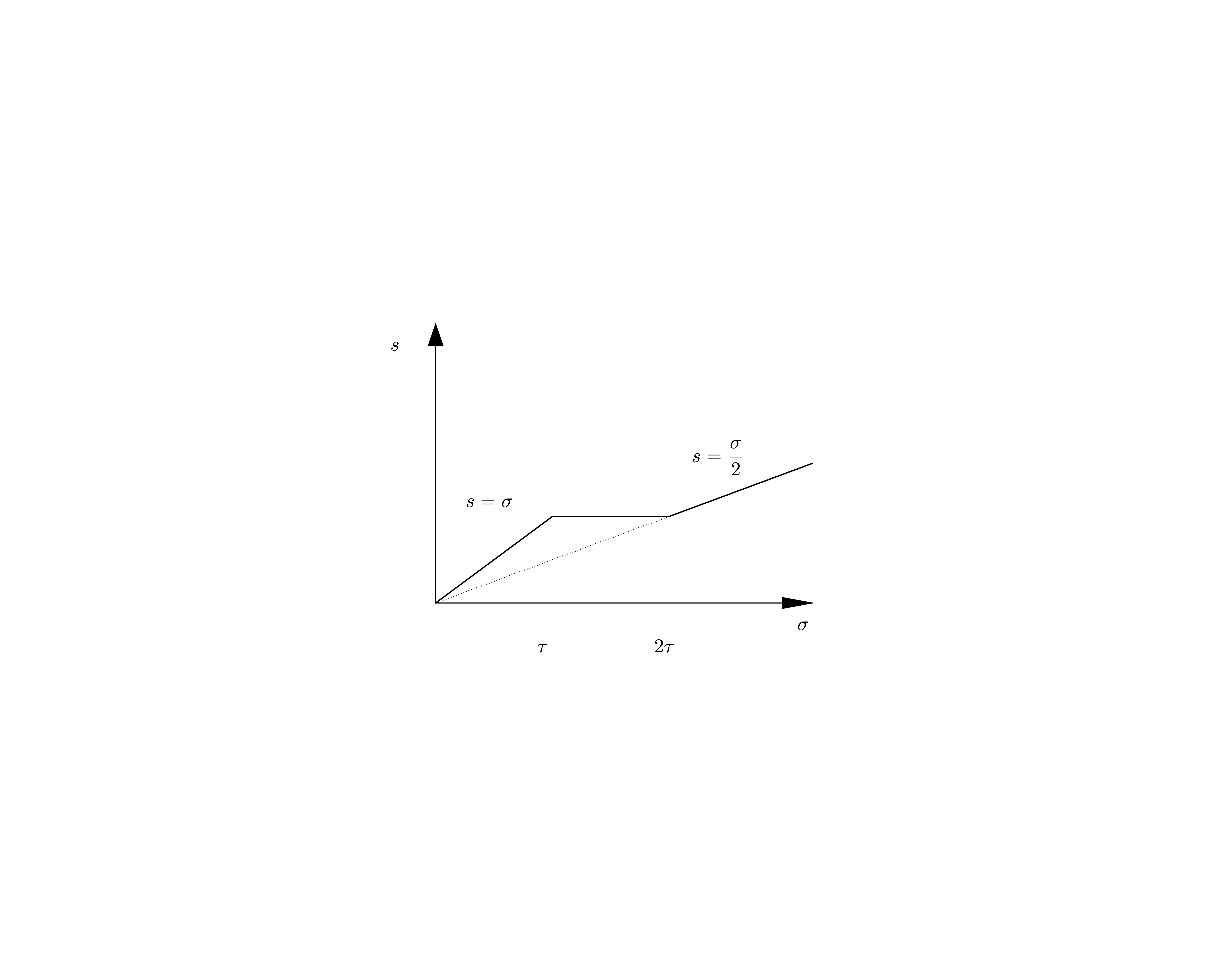}
\end{center}
\caption{An illustration of the function $\runS_{\tau,2}(\sigma)$ used in Lemma \ref{lemma:runS} }
\end{figure}

\begin{lemma} \label{lemma:runS}
$$
\soc_{\runS}(A) = \underset{W}{\arg\min}  \quad  \S_{\tau}\para{W} +  \frac{1}{q-1}\fro{W-A}.
$$
\end{lemma}
\begin{proof}
Since $\S_{\tau}(W)$ depends only on the singular values of $W$, von-Neumann's inequality \cite{von1937some} shows that the solution to the above problem will have the same singular vectors as $A$, (see the proof of Theorem \ref{kalle} in \cite{andcarper} for more details). Henceforth it will be of the form $\soc_\runS(A)$ where $\runS$ is defined by
\begin{equation}\label{muformel2}
\runS(\sigma_j(A))= \min_{\omega} \left(  \max_{}\para{\omega^2-\tau^2,0}+ \frac{1}{q-1}\left( \omega-\sigma_j(A)\right)^2  \right),
\end{equation}
and by equating the subgradient of the above expression with respect to $\omega$ with zero, it is easily verified that this is solved by the function \eqref{runS}.
\end{proof}

Let $\mathcal{P}_{\Hankelset}$ denote the operator of orthogonal projection onto $\Hankelset$.
We now introduce the (non-linear) operator $\dualproj_{F,\tau,q}:\mathbb{M}_{M,N}\rightarrow \mathbb{M}_{M,N}$ which will play a key role in our solution of the original problem \eqref{probLagrange}.
\begin{definition}
Set \begin{equation} \label{dualproj_def}
\dualproj_{F,\tau,q}(W) = \soc_{\runS_{\tau,q}}\para{q F + \mathcal{P}_{\Hankelset^\perp}\para{W}},
\end{equation}
which we abbreviate $\dualproj$ when $F,\tau,q$ are clear from the context.\end{definition}
\begin{theorem}\label{uli} There exists a fixed point $W^{\circ}$ of $\dualproj$ such that the problem
\begin{equation} \label{f_ast_X+Y}
 \begin{aligned}
 & \underset{X,Y}{\text{minimize}}
 & & \J^\ast(X,Y) \\
 & \text{subject to}
 & & X+Y \in \Hankelset^\perp.
 \end{aligned}
\end{equation}
is solved by
\begin{equation}\label{XY-formulas}\begin{aligned}
X^{\circ}&={2p}\para{\mathcal{P}_{\Hankelset}\para{W^{\circ}}-F} +2\mathcal{P}_{\Hankelset^\perp}\para{W^{\circ}} \\
Y^{\circ}&=- {2p}\para{\mathcal{P}_{\Hankelset}\para{W^{\circ}}-F}.
\end{aligned}\end{equation}
\end{theorem}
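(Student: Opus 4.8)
The plan is to view \eqref{f_ast_X+Y} as the convex problem of minimizing $\J^\ast$ over the subspace $\M^\perp=\{(X,Y):X+Y\in\Hankelset^\perp\}$, to write its first-order optimality condition as the requirement that $\partial\J^\ast$ meet $\M$, and then to recognize that condition, via Lemma \ref{lemma:runS}, as the fixed-point equation $W^\circ=\dualproj(W^\circ)$. Throughout I use the coordinates $C=\frac X2+\frac Y{2q}+F$ and $E=\frac Y{2q}+F$ from the proof of Theorem \ref{thm:bi_conjugate}, in which $\J^\ast(X,Y)=\S_\tau(C)+(q-1)\fro{E}-q\fro{F}$.

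I would first secure a minimizer. The map $(X,Y)\mapsto(C,E)$ is an affine isomorphism, and since $\S_\tau(C)\ge\fro{C}-\min(M,N)\tau^2$ the objective is coercive; being convex and continuous on the closed subspace $\M^\perp$, it attains a minimum at some $(X^\circ,Y^\circ)$. As recalled in Section \ref{secnew}, a point minimizes a convex function over a subspace precisely when the subdifferential there contains an element of the orthogonal complement of that subspace; since $(\M^\perp)^\perp=\M$, this means there is $(A^\circ,B^\circ)\in\partial\J^\ast(X^\circ,Y^\circ)$ with $A^\circ=B^\circ\in\Hankelset$.

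Next I would exploit this condition. By the subdifferential chain rule through $C$ and $E$, every element of $\partial\J^\ast(X^\circ,Y^\circ)$ has the form $A=\frac12 S$, $B=\frac1{2q}S+\frac{q-1}q E^\circ$ with $S\in\partial\S_\tau(C^\circ)$. Imposing $A^\circ=B^\circ$ forces $S=2E^\circ$ and hence $A^\circ=E^\circ$; the further requirement $A^\circ\in\Hankelset$ then gives $E^\circ\in\Hankelset$, i.e. $Y^\circ\in\Hankelset$. Combining $Y^\circ\in\Hankelset$ with feasibility $X^\circ+Y^\circ\in\Hankelset^\perp$ yields $\mathcal{P}_{\Hankelset}(X^\circ)=-Y^\circ$. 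Setting $W^\circ:=C^\circ=\frac{X^\circ}2+\frac{Y^\circ}{2q}+F$ and $P:=\mathcal{P}_{\Hankelset}(W^\circ)$, the projection identities give $P=F-\frac{q-1}{2q}Y^\circ$ and $\mathcal{P}_{\Hankelset^\perp}(X^\circ)=2\mathcal{P}_{\Hankelset^\perp}(W^\circ)$, whence $Y^\circ=-2p(P-F)$ and $X^\circ=2p(P-F)+2\mathcal{P}_{\Hankelset^\perp}(W^\circ)$, which is exactly \eqref{XY-formulas}. Back-substituting, $E^\circ=\frac{qF-P}{q-1}$ and therefore $S=2E^\circ=\frac2{q-1}(qF-P)\in\partial\S_\tau(W^\circ)$.

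Finally I would identify the fixed point. Since $qF+\mathcal{P}_{\Hankelset^\perp}(W^\circ)-W^\circ=qF-P$, the relation $\frac2{q-1}(qF-P)\in\partial\S_\tau(W^\circ)$ is exactly the stationarity condition for the strictly convex problem of Lemma \ref{lemma:runS} with $A=qF+\mathcal{P}_{\Hankelset^\perp}(W^\circ)$; uniqueness of that minimizer forces $W^\circ=\soc_\runS(qF+\mathcal{P}_{\Hankelset^\perp}(W^\circ))=\dualproj(W^\circ)$, so $W^\circ$ is a fixed point and \eqref{XY-formulas} recovers the dual minimizer $(X^\circ,Y^\circ)$. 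Each implication above is reversible, so conversely any fixed point produces a solution of \eqref{f_ast_X+Y} through \eqref{XY-formulas}. The main obstacle is that $\S_\tau$ is nondifferentiable exactly at matrices having a singular value equal to $\tau$, so the whole argument must be run with subgradients; the decisive point is to match the subgradient $S$ forced by subspace optimality with the one characterizing the proximal map $\soc_\runS$ in Lemma \ref{lemma:runS}, which relies on the fact that $\partial\S_\tau(C)$ consists of matrices sharing the singular vectors of $C$.
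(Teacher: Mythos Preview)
Your argument is correct and genuinely different from the paper's. The paper parametrizes the constraint $X+Y\in\Hankelset^\perp$ explicitly as $X=2qH+H_1^\perp$, $Y=-2qH+H_2^\perp$, substitutes into $\J^\ast$, eliminates $H_2^\perp$ by inspection, and after the change of variable $W=F+(q-1)H+H^\perp$ reduces the dual problem to minimizing a coercive convex functional $\G(W,V)$ on $\mathbb{M}_{M,N}\times\Hankelset^\perp$; only at the very end is Lemma~\ref{lemma:runS} invoked to read off the fixed-point equation, and the formulas~\eqref{XY-formulas} are obtained by tracing the substitutions backwards. You instead work directly with the first-order optimality condition $\partial\J^\ast(X^\circ,Y^\circ)\cap\M\neq\emptyset$, compute $\partial\J^\ast$ once via the affine chain rule in the $(C,E)$ coordinates, and match the resulting subgradient relation with the prox characterization underlying Lemma~\ref{lemma:runS}. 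Your route is shorter, makes the duality structure transparent, and immediately yields the converse (every fixed point produces a minimizer of~\eqref{f_ast_X+Y}), which the paper does not state. The price is that you rely on standard convex-analytic machinery (subdifferential chain rule for invertible affine maps, separability of $\partial g$), whereas the paper's computation is entirely elementary and self-contained.
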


\begin{proof}
Since $X+Y\in\Hankelset^{\perp}$ we can write
$$
X=2q H+H_1^\perp, \quad Y= -2q H+ H_2^\perp.
$$
with $H \in \Hankelset$ and $H_1^\perp, H_2^\perp \in \Hankelset^\perp$.
By \eqref{Jast} the problem \eqref{f_ast_X+Y} becomes
\begin{align*}
&\underset{H,H_1^\perp, H_2^\perp}{\arg\min} \J^\ast\para{2q H+H_1^\perp, -2q H+ H_2^\perp} =\\
&\underset{H,H_1^\perp, H_2^\perp}{\arg\min}  \S_{\tau} \para{\frac{2q H +H_1^\perp}{2}+\frac{-2q H+ H_2^\perp}{2q}+F } \!+\! (q\!-\!1)\fro{\frac{ -2q H+ H_2^\perp}{2q}+F}.\\
\end{align*}
Note that $$
\frac{2q H+H_1^\perp}{2}+\frac{-2q H+ H_2^\perp}{2q} = (q-1)H +\frac{H_1^\perp}{2}+\frac{H_2^\perp}{2q} =(q-1) H + H^\perp,
$$
where $H^\perp =\frac{H_1^\perp}{2}+\frac{H_2^\perp}{2q}$. The problem can thus be rewritten
$$
\underset{H, H_2^\perp,H^\perp}{\arg\min} \quad \S_{\tau}\para{F+(q-1)H+H^\perp}  + (q-1)\fro{F-H+\frac{ H_2^{\perp}}{2q}},
$$
Since $F,H\in\Hankelset$ and $H_2^\perp\in\Hankelset^{\perp}$, it follows that
$$
\fro{F-H+\frac{H^\perp_2}{2q}} = \fro{F- H }+\fro{\frac{ H_2^\perp}{2q}},
$$
and hence we conclude that the optimal $H_ 2^\perp$ is the zero matrix. Thus $$H^\perp =\frac{H_1^\perp}{2},$$ and the objective functional can be replaced by
$$
\underset{H,H^\perp}{\arg\min} \quad  \S_{\tau}\para{F+(q-1)H+H^\perp}   + (q-1)\fro{H-F}.
$$
Using the substitution $W=F+(q-1)H+H^\perp$ it then becomes
\begin{align*}
&\S_{\tau}\para{W}  +  (q-1)\fro{\frac{W}{q-1}-\para{ \frac{qF+H^\perp}{q-1}}} = \\&\S_{\tau}\para{W}  + \frac{1}{q-1} \fro{W-\para{q F+ \mathcal{P}_{\Hankelset^\perp}\para{W}}}.
\end{align*}
The minimization problem will thus be solved by
\begin{equation}\label{ming1}
W =\underset{W}{\argmin}~  \G(W,P_{\Hankelset^{\perp}}W),
\end{equation}
where $\G:\mathbb{M}_{M,N}\times \Hankelset^{\perp}\rightarrow\mathbb{R}$ is the functional defined by
$$
\G(W,V)=\S_{\tau}\para{W} +\frac{1}{q-1}\fro{W-\para{q F + V}}.
$$
Note that $\G$ is convex since it is obtained from the convex function $\J^\ast$ by affine changes of coordinates. It is also easy to see that $$\lim_{R\rightarrow\infty}\sup_{\|W\|+\|V\|>R} \G(W,V)=\infty, $$ so it has at least one global minimum. Let $(W^{\circ},V^{\circ})$ be one such. Since $\G(W^{\circ},V^{\circ})=\min_{V}\G(W^{\circ},V)$, it is easy to see that $V^{\circ}=P_{\Hankelset^{\perp}}W^{\circ}$. But then $$W^{\circ}=\argmin_W \G(W,V^{\circ})=\argmin_W \G(W,P_{\Hankelset^{\perp}}W^{\circ}),$$
which by Lemma \ref{lemma:runS} read
$$
W^{\circ}=\soc_{\runS}\para{qF + \mathcal{P}_{\Hankelset^\perp}\para{W^{\circ}}}=\dualproj(W^{\circ}).$$
It is also evident that $W^{\circ}$ is a solution to \eqref{ming1}. The formulas in \eqref{XY-formulas} now follows by tracing the changes of variables backwards.
\end{proof}

\section{A fixed point algorithm}\label{secfixed}
\subsection{Discussion of the objective functional}\label{secdiscussion}
To recapitulate our main objective, we wish to minimize \begin{equation} \label{Hankel_org1}
\sigma_0^2 \rank(A) + \|A-F\|^2,\quad A\in \Hankelset,
\end{equation}
which we replace by its convex envelope
\begin{equation} \label{Hankel_1}
\R_{\sigma_0}(A) + \|A-F\|^2,\quad A\in \Hankelset,
\end{equation}
to achieve convexity. We will in this section show how to minimize
\begin{equation}\label{Lagrange_final}
\argmin_{A\in \Hankelset} \R_{\tau} (A) +q \|A-F\|^2,
\end{equation}
for any $1<q<\infty$ and $\tau>0$. Note that the corresponding modification to the original objective functional \eqref{Hankel_org1}, i.e. \begin{equation} \label{new}
\argmin_{A\in \Hankelset}\tau^2 \rank(A) + q\|A-F\|^2,
\end{equation} does not change the problem, since we may equivalently solve \eqref{Hankel_org1} with $\sigma_0=\tau/\sqrt{q}$ (to see this, multiply \eqref{new} with $1/q$). However, this is not the case with \eqref{Lagrange_final}, for it is easy to show that $$\frac{1}{q}\R_{\tau}(A)=\R_{\tau/\sqrt{q}}(A/\sqrt{q}).$$
Thus \eqref{Lagrange_final} is equivalent with \begin{equation}\label{Lagrange_final_variation}
\argmin_{A\in \Hankelset} \R_{\sigma_0} (A/\sqrt{q}) +\|A-F\|^2,
\end{equation}
where $\sigma_0=\tau/\sqrt{q}$, compare with \eqref{Hankel_org1} and \eqref{Hankel_1}. Note as well that
\begin{align*}&\R_{\tau} (A) +q \|A-F\|^2=\R_{\tau} (A) + \|A-F\|^2+(q-1)\fro{A-F},\end{align*} by which we conclude that the objective functional in \eqref{Lagrange_final} is strictly convex with a unique minimizer $A^{\circ}$. Moreover, the above expression is clearly less than or equal to \begin{align*}\tau^2 \rank(A) + \|A-F\|^2+(q-1)\fro{A-F} =\tau^2 \rank(A) + q\|A-F\|^2,\end{align*} so if it happens that
\begin{equation}\label{amotsdal}
\R_{\tau} (A^{\circ}) +q \|A^{\circ}-F\|^2=\tau^2 \rank(A^{\circ}) + q\|A^{\circ}-F\|^2,
\end{equation}
then $A^{\circ}$ is a solution to the original problem \eqref{new}. However, by the definition of $\R_{\tau}$ it follows that \eqref{amotsdal} is satisfied if and only if $A^{\circ}$ has no singular values in the interval $(0,\tau)$.

\subsection{The basic fixed-point algorithm}\label{secthealg}

We now solve \eqref{Lagrange_final}, using fixed points of the operator $\dualproj_{F,\tau,q}$ from the previous section. It is easy to see that these may not be unique. Despite that, we have the following.

\begin{table}
\begin{lstlisting}[language=Matlab, basicstyle=\scriptsize, breaklines=false]
function A=fixedpoint(F,tau_0, N_iter),
  W=0*F;
  for n=1:N_iter,
    [u,s,v]=svd(2*F+PHp(W));
    W=u*max(min(s,tau_0),s/2)*v';
  end;
  A=2*F-PH(W);
\end{lstlisting}
\begin{lstlisting}[language=Matlab, basicstyle=\scriptsize, breaklines=false]
Lambda=@(a)hankel(a(1:(end+1)/2),a((end+1)/2:end));
function a=iLambda(A),
  N=size(A,1)-1;for j=-N:N, a(N+1+j,1)=mean(diag(flipud(A),j));end
PH=@(A)Lambda(iLambda(A));PHp=@(A)A-PH(A);
\end{lstlisting}

\caption{\label{tab:alg} MATLAB implementation of the fixed point algorithm of Theorem \ref{buli} for $q=2$, along with subroutines for Hankel operations.}
\end{table}

\begin{theorem}\label{buli}
The Picard iteration $W^{n+1}= \dualproj_{F,\tau,q}(W^n)$ converges to a fixed point $W^{\circ}$.
Moreover, $\mathcal{P}_\Hankelset (W^{\circ})$ is unique and $$
A^{\circ}= \frac{1}{q-1}\left( q F- \mathcal{P}_\Hankelset (W^{\circ}) \right),
$$
is the unique solution to \eqref{Lagrange_final}.
\end{theorem}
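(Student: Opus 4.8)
The plan is to establish three things in sequence: (i) convergence of the Picard iteration $W^{n+1}=\dualproj(W^n)$ to a fixed point, (ii) uniqueness of $\mathcal{P}_\Hankelset(W^\circ)$ across all fixed points, and (iii) that the resulting $A^\circ$ solves \eqref{Lagrange_final}. The natural strategy for (i) is to show that $\dualproj$ is nonexpansive (or better, that its relevant iterates behave like those of an averaged/firmly-nonexpansive operator), so that the Picard iteration converges. From the definition \eqref{dualproj_def}, $\dualproj(W)=\soc_{\runS}(qF+\mathcal{P}_{\Hankelset^\perp}(W))$ is a composition of the affine map $W\mapsto qF+\mathcal{P}_{\Hankelset^\perp}(W)$ with the functional calculus $\soc_{\runS}$. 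The first component $\mathcal{P}_{\Hankelset^\perp}$ is an orthogonal projection, hence $1$-Lipschitz. For the second, the key input is the Lipschitz estimate \eqref{lip}: since $\runS_{\tau,q}(\sigma)=\max(\min(\sigma,\tau),\sigma/q)$ is piecewise linear with slopes in $\{0,1/q,1\}$, we have $\|\runS\|_{Lip}=1$, so $\soc_{\runS}$ is also $1$-Lipschitz. Thus $\dualproj$ is nonexpansive, but nonexpansiveness alone does not give convergence of the Picard sequence.

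\textbf{The main obstacle} is precisely closing the gap between mere nonexpansiveness and genuine convergence of the iterates. I would attack this by exploiting the convex-optimization structure established in Theorem \ref{uli}: the fixed points of $\dualproj$ are exactly the minimizers $W^\circ$ of the convex coercive functional $W\mapsto\G(W,\mathcal{P}_{\Hankelset^\perp}W)=\S_\tau(W)+\frac{1}{q-1}\fro{W-(qF+\mathcal{P}_{\Hankelset^\perp}W)}$ from \eqref{ming1}. I would try to realize $\dualproj$ as a proximal-type map whose fixed-point iteration is a known convergent scheme. Indeed, Lemma \ref{lemma:runS} identifies $\soc_\runS(A)=\argmin_W \S_\tau(W)+\frac{1}{q-1}\fro{W-A}$, i.e.\ $\soc_\runS$ is the proximal operator of $(q-1)\S_\tau$ (up to scaling). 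Proximal operators are firmly nonexpansive, and the composition with the affine projection-based map can be recast as a forward-backward or Krasnoselskii--Mann iteration. The cleanest route is to observe that because $\S_\tau$ depends only on singular values and $\soc_\runS$ is a prox, the map $\dualproj$ is firmly nonexpansive on the relevant subspace or is $\alpha$-averaged; for averaged operators with nonempty fixed-point set, the Krasnoselskii--Mann / Opial theorem guarantees convergence of the Picard iterates to a fixed point. Establishing the averaged/firmly-nonexpansive property rigorously — tracking how $\mathcal{P}_{\Hankelset^\perp}$ interacts with the proximal step — is the technical crux.

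For (ii), uniqueness of $\mathcal{P}_\Hankelset(W^\circ)$, I would argue via strict convexity. Although $\dualproj$ may have multiple fixed points, the functional in \eqref{Lagrange_final} is strictly convex (as noted in Section \ref{secdiscussion}, since $\R_\tau(A)+q\fro{A-F}=\R_\tau(A)+\fro{A-F}+(q-1)\fro{A-F}$ with the last term strictly convex), so its minimizer $A^\circ$ is unique. Since the formula $A^\circ=\frac{1}{q-1}(qF-\mathcal{P}_\Hankelset(W^\circ))$ must produce this unique $A^\circ$ regardless of which fixed point $W^\circ$ is chosen, invertibility of the affine relation forces $\mathcal{P}_\Hankelset(W^\circ)$ to be unique.

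For (iii), that $A^\circ$ solves \eqref{Lagrange_final}, I would invoke the duality set up in Section \ref{secnew}: Theorem \ref{uli} shows the fixed point yields a solution $(X^\circ,Y^\circ)$ to the dual problem \eqref{f_ast_X+Y}, which is the Fenchel dual of the primal problem \eqref{daproblemHankel}. Since $\J^{\ast\ast}$ is convex and the duality gap vanishes (by the argument around \eqref{Dlambda}--\eqref{Hankelorg1}), the primal optimum is attained, and its value equals \eqref{Lagrange_final} once the constraint $A=B\in\Hankelset$ collapses $\J^{\ast\ast}(A,A)=\R_\tau(A)+q\fro{A-F}$. The final step is to recover the primal minimizer from the dual solution via the optimality/KKT conditions and verify it matches the stated formula for $A^\circ$ — this amounts to tracing the changes of variables in \eqref{1} and \eqref{XY-formulas} backward and confirming the primal-dual relationship $A=B=\mathcal{P}_\Hankelset(\cdot)$ evaluated at the fixed point.
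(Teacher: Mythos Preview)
Your three-part strategy matches the paper's, and parts (i) and (iii) are essentially the same argument at different levels of abstraction. For (i), you invoke the fact that $\soc_\runS$ is a proximal map (Lemma \ref{lemma:runS}), hence firmly nonexpansive, and compose with the firmly nonexpansive affine map $W\mapsto qF+\mathcal{P}_{\Hankelset^\perp}W$ to conclude $\dualproj$ is averaged; the paper instead reduces to the $\Hankelset^\perp$-component (noting that $\dualproj(W)$ depends only on $\mathcal{P}_{\Hankelset^\perp}W$) and verifies firm nonexpansiveness of $\mathcal{P}_{\Hankelset^\perp}\dualproj$ on $\Hankelset^\perp$ by the elementary check that $\runSh(\sigma)=2\runS(\sigma)-\sigma$ is $1$-Lipschitz, then applies \eqref{lip}. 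Both routes work; yours quotes composition rules for averaged maps, the paper's is hands-on. For (iii), both use the Fenchel duality of Section \ref{secnew} together with Theorem \ref{uli}; the paper finishes by minimizing the Lagrangian over $B$ first to get $B^\circ=F+\tfrac{1}{q}(A^\circ-\mathcal{P}_\Hankelset(W^\circ))$ and then imposing the constraint $A^\circ=B^\circ$ to read off the formula.

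The one place your route genuinely diverges is (ii). The paper proves uniqueness of $\mathcal{P}_\Hankelset(W^\circ)$ directly from nonexpansiveness: for two fixed points $W^\circ=H+T$, $\tilde W^\circ=\tilde H+\tilde T$ (Hankel parts $H,\tilde H$, orthogonal parts $T,\tilde T$), the $1$-Lipschitz bound on $\soc_\runS$ gives $\|H-\tilde H\|^2+\|T-\tilde T\|^2=\|\dualproj(W^\circ)-\dualproj(\tilde W^\circ)\|^2\le\|T-\tilde T\|^2$, so $H=\tilde H$. Your argument via strict convexity of \eqref{Lagrange_final} is valid in principle but requires (iii) to hold for \emph{every} fixed point, whereas Theorem \ref{uli} as stated only furnishes one. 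You can patch this by observing that any fixed point of $\dualproj$ is a coordinate-wise minimizer of the jointly convex functional $\G$ from the proof of Theorem \ref{uli}, hence a global minimizer and thus a dual optimum; but the paper's direct two-line argument is shorter and keeps (ii) and (iii) logically independent.
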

Before the presenting the proof, let us give a few more comments on the equivalent problem formulation \eqref{Lagrange_final_variation}. Note that $$\lim_{q\rightarrow 0^+}\R_{\tau} (A/\sqrt{q})=\tau^2\rank(A),$$ but that our method requires $q>1$ (in fact, the objective functional is no longer convex beyond $q=1$). However, choosing $q\approx 1$ will yield slow convergence of the Picard iteration since then $\runS_{\tau,q}(\sigma)\approx \sigma$. On the other hand, the effect of choosing a large $q$ (for a fixed $\tau$) is that the term $\R_{\tau} (A/\sqrt{q})$ deviates further from $\tau^2\rank(A)$. We have found that $q=2$ works well in practice.

\begin{proof}
We first prove that the Picard iteration converges. Note that the sequence $V^n=\mathcal{P}_{\Hankelset^{\perp}}W^n,$ $n\in\mathbb{N}$, is generated by the Picard iteration of the operator $\mathcal{P}_{\Hankelset^{\perp}}\dualproj$ and the starting-point $V^0=\mathcal{P}_{\Hankelset^{\perp}}W^0$. Moreover $W^{n+1}=\dualproj(V^n)$. Since $\dualproj$ is continuous (in fact, it is non-expansive by \eqref{lip}) it suffices to show that $(V^n)_{n=0}^\infty$ is convergent.

It is well known \cite{browder1967convergence,reich1987asymptotic} that for firmly non-expansive operators (in finite dimensional space) the Picard iteration converges to a fixed point as long as the set of fixed points of the operator is non-empty. By Theorem \ref{uli} there exists a fixed point $W^{\circ}$ of $\dualproj$, and it clearly follows that $V^{\circ}=\mathcal{P}_{\Hankelset^{\perp}}W^{\circ}$ is a fixed point of $\mathcal{P}_{\Hankelset^{\perp}}\dualproj$. It remains to show that $\mathcal{P}_{\Hankelset^{\perp}}\dualproj$ is firmly non-expansive, as an operator on $\Hankelset^{\perp}$. An equivalent to firmly non-expansive is that $2\mathcal{P}_{\Hankelset^{\perp}}\dualproj -I$ is non-expansive \cite{bauschke2007fenchel}, i.e.,
\begin{equation}\label{sol}
\left\| \left(2\mathcal{P}_{\Hankelset^{\perp}}\dualproj(V)-V\right)-\left(2\mathcal{P}_{\Hankelset^{\perp}}\dualproj(W)-W\right) \right\| \le \| V-W \|, \quad \forall V,W\in \Hankelset^{\perp},
\end{equation}
which we will verify shortly. To this end, note that on $\Hankelset^{\perp}$ we have    \begin{equation}\label{sol1}2\mathcal{P}_{\Hankelset^{\perp}}\dualproj(V)=2\mathcal{P}_{\Hankelset^{\perp}}\runS(qF+V),\end{equation}
and set
\begin{align*}
\runSh(\sigma) &= 2\runS(\sigma)-\sigma = \max\left( \min\left( 2 \sigma,2 \tau\right),\frac{2}{q}\sigma\right)-\sigma \\&=\max\left( \min\left( \sigma,2 \tau-\sigma \right),\para{\frac{2}{q}-1}\sigma\right).
\end{align*}
This function $\runSh$ is clearly Lipschitz continuous with Lipschitz constant equal to one. Setting $\tilde{V}=(qF+V)$ and $\tilde{W}=(qF+W)$, \eqref{lip} and \eqref{sol1} implies that
\begin{align*}
&\left\| \big(2\mathcal{P}_{\Hankelset^{\perp}}\dualproj(V)-V\big)-\big(2\mathcal{P}_{\Hankelset^{\perp}}\dualproj(W)-W\big) \right\|^2  = \\&
\left\|\mathcal{P}_{\Hankelset^\perp}\left( \big(2  \runS(\tilde V)-V \big)-\big(2  \runS(\tilde W)-W \big)\right) \right\|^2 = \\
&\left\|\mathcal{P}_{\Hankelset^\perp}\left( \big(2  \runS(\tilde V)-\tilde V \big)-\big(2  \runS({\tilde W})-\tilde W \big)\right) \right\|^2 =
\left\|\mathcal{P}_{\Hankelset^\perp}\left( \runSh ({\tilde V})- \runSh ({\tilde W})\right)  \right\|^2 \le \\
&\left\| \runSh ({\tilde V})- \runSh ({\tilde W})  \right\|^2\le
\left\| {\tilde V}- {\tilde W}  \right\|^2=\|V-W\|,\end{align*} which establishes \eqref{sol}, and hence the original Picard iteration converges to a fixed point $W^{\circ}$ of $\dualproj$.
For the uniqueness of $\mathcal{P}_{\Hankelset}(W^\circ)$, suppose that $\tilde{W}^{\circ}$ is another fixed point of $\dualproj$ and write $W^{\circ}=H+T,~\tilde{W}^{\circ}=\tilde{H}+\tilde{T}$, where $H, \tilde{H} \in \mathcal{H}$ and $T, \tilde{T} \in \mathcal{H}^\perp$. Then
\begin{align*}
&\| H-\tilde{ H}\|^2 +\| T- \tilde{T}\|^2 =   \| {W}^{\circ}- \tilde{W}^{\circ}\|^2  = \| \dualproj(W^{\circ})-\dualproj(\tilde{W}^{\circ})\|^2 = \\&\| \runS(qF + T)-\runS(qF + \tilde{T})\|^2\le \| (q F + T)-(q F + \tilde{T})\|^2 = \| T- \tilde T\|^2,
\end{align*}
where the last inequality is due to \eqref{lip} and the fact that $\runS_{\tau}$ has Lipschitz constant equal to one. But then
$$
\| H- \tilde{H}\|^2 \le  \| T- \tilde{T}\|^2 - \| T- \tilde{T}\|^2 =0,
$$
implying that
$
H=\tilde H.
$
Finally, if $(A^{\circ},B^{\circ})$ denotes the solution to (\ref{daproblemHankel}), then it is clear that $A^{\circ}$ solves (\ref{Lagrange_final}). By the theory in Section \ref{secnew} and Theorem \ref{uli}, it follows that $(A^{\circ},B^{\circ})$ also is a solution to
\begin{align} \label{argminAB}
\argmin_{A,B}   & \R_{\tau}\para{A} + p\fro{A-B}+q \fro{B-F} \\&- \left\langle A,2p \para{\mathcal{P}_{\Hankelset}\para{W^{\circ}}-F} +2\mathcal{P}_{\Hankelset^\perp}\para{W^{\circ}}  \right\rangle - \langle B, -2p \para{\mathcal{P}_{\Hankelset}\para{W^{\circ}}-F} \rangle.\nonumber
\end{align}
where $W^{\circ}$ is as above. By fixing $A^{\circ}$ we find that
\begin{equation}\label{fg}
\begin{aligned}
B^{\circ}&=\argmin_{B}     p \fro{A^{\circ}-B}+q \fro{B-F} - \left \langle B, -2p \para{\mathcal{P}_{\Hankelset}\para{W^{\circ}}-F} \right \rangle \\
 &= {F} + \frac{1}{q}(A^{\circ}-\mathcal{P}_{\Hankelset}\para{W^{\circ}}).
\end{aligned}
\end{equation}
But inserting the constraint $A^{\circ}=B^{\circ}$ (from \eqref{daproblemHankel}) in the above equation yields
\begin{equation}\label{A_W_solution}
(q-1)A^{\circ}= q F-\mathcal{P}_{\Hankelset}\para{W^{\circ}},
\end{equation}
as desired. The uniqueness of $A^{\circ}$ was argued before the proof, so it is complete.
\end{proof}

%
%

The matrix $W^{\circ}$ has several peculiar properties, as shown by the next theorem, (compare with \eqref{Lagrange_final} and note the absence of the condition $A\in\Hankelset$ below).

\begin{figure}
\begin{center}
\includegraphics[width=0.8\linewidth,trim=0in 0in 0in 0in]{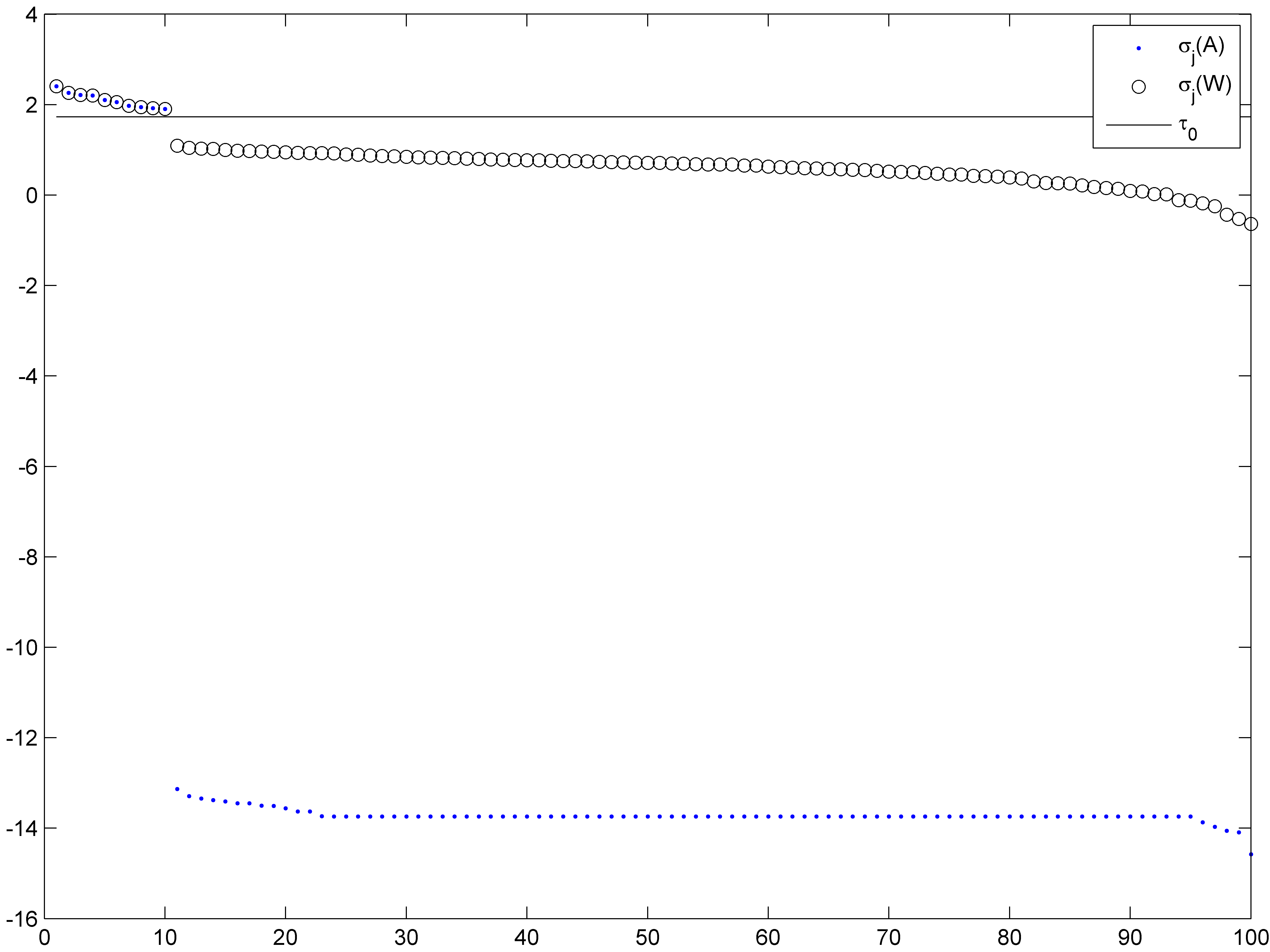}
\end{center}
\caption{Illustration of Theorem \ref{thm:A_structure}. The blue dots show $\sigma_j(A)$,the the black circles $\sigma_j(W)$, and the black line the level of $\tau$. For $j$ such that $\sigma_j(W)>\tau$, it holds that $\sigma_j(W) = \sigma_j(A)$. In this case $\sigma_j(A) \approx 0$ for the larger values of $j$, implying that the obtained result is in fact a solution to the non-convex problem (\ref{new}) as well as a solution to the convex problem (\ref{Lagrange_final_variation}).}
\end{figure}

\begin{theorem} \label{thm:A_structure}
Let $W^{\circ}$ and $A^{\circ}$ be as in Theorem \ref{buli}. Then $A^{\circ}$ solves \begin{align*}
&\argmin_{A}   \R_{\tau}\para{A} + \fro{ A-W^\circ}.
\end{align*}
Moreover, if $W^{\circ} = U \diag(\sigma_j(W^{\circ})) V^\ast$, then $A^{\circ} = U \diag(\sigma_j(A^{\circ})) V^\ast$, where
\begin{equation}\label{sigmaj(A)}
 \begin{cases}\sigma_j(A^{\circ}) =\sigma_j(W^{\circ}), &\text{if $\sigma_j(W^{\circ}) >\tau$}\\
 0\leq \sigma_j(A^{\circ}) \leq \sigma_j(W^{\circ}), &\text{if $\sigma_j(W^{\circ}) =\tau$}\\
\sigma_j(A^{\circ}) =0, &\text{if $\sigma_j(W^{\circ}) <\tau$}
\end{cases}.
\end{equation}
In particular, if $\sigma_j(W^{\circ})$ has no singular values equal to $\tau$, then $A^{\circ}$ is the solution to the non-convex problem \eqref{new}.
\end{theorem}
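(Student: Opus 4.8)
The plan is to prove the two assertions of Theorem~\ref{thm:A_structure} in sequence, starting from the characterization of $A^\circ$ already obtained in Theorem~\ref{buli}. The first step is to establish that $A^\circ$ minimizes $\R_\tau(A)+\fro{A-W^\circ}$. I would begin from equation~\eqref{A_W_solution}, which gives $(q-1)A^\circ = qF - \mathcal{P}_\Hankelset(W^\circ)$, and from the relation~\eqref{fg} expressing $B^\circ$. The key algebraic observation is that the unconstrained minimizer of $\R_\tau(A)+\fro{A-W^\circ}$ is by Theorem~\ref{kalle} (specifically~\eqref{Ifun2}) exactly the proximal-type operator associated with $\R_\tau$, and I would aim to show that the combination $qF-\mathcal{P}_\Hankelset(W^\circ)$ rearranges into precisely $W^\circ$ fed through this operator. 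Concretely, using $W^\circ = \dualproj(W^\circ) = \soc_{\runS}(qF+\mathcal{P}_{\Hankelset^\perp}(W^\circ))$ from the fixed-point equation, together with the splitting of $W^\circ$ into its $\Hankelset$ and $\Hankelset^\perp$ components, I would substitute and simplify to identify $A^\circ$ with the argmin of the stated unconstrained functional.

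The second step is the singular-value structure~\eqref{sigmaj(A)}. Here the essential tool is again von~Neumann's inequality (as invoked in Lemma~\ref{lemma:runS}): since the objective $\R_\tau(A)+\fro{A-W^\circ}$ depends on $A$ only through its singular values and its alignment with $W^\circ$, the minimizer $A^\circ$ must share the singular vectors $U,V$ of $W^\circ$, so $A^\circ = U\diag(\sigma_j(A^\circ))V^\ast$. The problem then decouples across singular values into the scalar minimizations
\begin{equation*}
\sigma_j(A^\circ) = \underset{s\ge 0}{\argmin}\ \Big(\tau^2 - \big(\max(\tau - s,0)\big)^2\Big) + (s - \sigma_j(W^\circ))^2,
\end{equation*}
using the definition of $\R_\tau$. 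Analyzing this one-dimensional convex problem by differentiating (or comparing subgradients) in the two regimes $s<\tau$ and $s\ge\tau$ yields the three cases: for $\sigma_j(W^\circ)>\tau$ the minimizer is $\sigma_j(W^\circ)$ itself, for $\sigma_j(W^\circ)<\tau$ it is $0$, and at the boundary $\sigma_j(W^\circ)=\tau$ the objective is flat on $[0,\tau]$, giving the stated interval of optimizers.

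The final assertion follows immediately: if no $\sigma_j(W^\circ)$ equals $\tau$, then~\eqref{sigmaj(A)} forces every $\sigma_j(A^\circ)$ to be either $0$ or strictly larger than $\tau$, so $A^\circ$ has no singular values in $(0,\tau)$. By the criterion established just before the proof of Theorem~\ref{buli} (namely that~\eqref{amotsdal} holds iff $A^\circ$ has no singular values in $(0,\tau)$), this is exactly the condition under which $A^\circ$ solves the non-convex problem~\eqref{new}. I expect the main obstacle to be the first step: verifying cleanly that $A^\circ$, defined via the \emph{constrained} dual solution, coincides with the \emph{unconstrained} proximal minimizer against $W^\circ$. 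The danger is bookkeeping errors in tracking the $\Hankelset$/$\Hankelset^\perp$ components through the fixed-point identity; the cleanest route is probably to verify the subgradient optimality condition $0 \in \partial\R_\tau(A^\circ) + 2(A^\circ - W^\circ)$ directly, rather than recomputing the argmin from scratch.
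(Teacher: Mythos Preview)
Your proposal is correct and aligns closely with the paper. Steps two and three (the von~Neumann reduction to scalar minimizations, the case analysis on $\sigma_j(W^\circ)$, and the final appeal to the criterion~\eqref{amotsdal}) are exactly what the paper does.

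For step one the paper takes a slightly different but equivalent path. Rather than checking the subgradient inclusion $0\in\partial\R_\tau(A^\circ)+2(A^\circ-W^\circ)$, it returns to the unconstrained Lagrangian~\eqref{argminAB} (which $(A^\circ,B^\circ)$ is already known to minimize from the proof of Theorem~\ref{buli}), substitutes the optimal $B$ as a function of $A$ via~\eqref{fg}, and completes the square in $A$ to obtain $\R_\tau(A)+\fro{A-W^\circ}$ plus a constant. Your subgradient route is simply the first-order version of this same computation: differentiating~\eqref{argminAB} in $A$ at $(A^\circ,B^\circ)$ with $A^\circ=B^\circ$ gives $X^\circ\in\partial\R_\tau(A^\circ)$, and a two-line check using $p=q/(q-1)$ and~\eqref{A_W_solution} shows $X^\circ=2(W^\circ-A^\circ)$. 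Your alternative route~(a) through the fixed-point identity also works, and in fact yields the singular structure directly: writing $Z=qF+\mathcal{P}_{\Hankelset^\perp}(W^\circ)$ one finds $(q-1)A^\circ = Z-\soc_{\runS}(Z)$, so $A^\circ$, $W^\circ=\soc_\runS(Z)$ and $Z$ all share singular vectors, and~\eqref{sigmaj(A)} drops out of the explicit form of $\runS$. Either variant is a perfectly good substitute for the paper's completion of squares.
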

\begin{proof}

By \eqref{fg}, $A^{\circ}$ minimizes (\ref{argminAB}) restricted to the subspace $B=F + \frac{1}{q}(A-\mathcal{P}_{\Hankelset}\para{W^{\circ}})$. By inserting this expression for $B$ in \eqref{fg} and by a completion of squares, we get
\begin{align*}
A^{\circ}=&\argmin_{A}   \R_{\tau}\para{A} + \fro{ A-W^\circ}.
\end{align*}
Since $\R_{\tau}$ only depend on the singular values of $A$, it follows by von-Neumann's inequality \cite{von1937some} that it is sufficient to consider the minimization of
\begin{equation}\label{AW_sigma_derivation_relation}
\sigma_j(A) =\argmin_{s} -\para{\max_{}\para{\tau-s,0}}^2+\para{s-\sigma_j(W)}^2 .
\end{equation}
The derivative of the function to be minimized is $2(\tau-\sigma_j(W))$ for $s\leq \tau$ and $2(s-\sigma_j(W))$ for $s\geq \tau$, by which \eqref{sigmaj(A)} easily follows. The final statement is immediate by the discussion in Section \ref{secdiscussion}.
\end{proof}

\subsection{The general fixed-point algorithm}\label{secgener}
In this Section we introduce a more general algorithm which allows increased flexibility. Examples of applications will be given in the coming sections. Let $\mathcal{V}$ be an Euclidean space and let $\mathfrak{M}:\Hankelset\rightarrow \mathcal{V}$ be a linear operator. Given $h\in\mathcal{V}$ define
$$
(W',A') = \dualproj_{h,\tau,q}^\MM (W,A),
$$
by
\begin{align*}
W' &= \soc_{\runS_{\tau,q}}(q \left( \MM^* (h-\MM A) + A\right) +\mathcal{P}_{\Hankelset^\perp}(W)),\\
A' &=  \frac{1}{q-1}\left( q\left( \MM^* (h-\MM A) + A\right) - \mathcal{P}_{\Hankelset}(W') \right).
\end{align*}
When there is no risk of confusion we will omit the subindices $h,\tau,q$ from the notation, and we will let $I$ denote the identity operator on $\Hankelset$.
\begin{theorem}\label{thmweighted}
Let $\MM$ be such that $\MM^*\MM\leq I$. Then there exists $W^{\circ}$ such that $(W^{\circ},A^{\circ})$ is a fixed point of $\dualproj_{h,\tau,q}^\MM$ if and only if $A^{\circ}$ is a stationary point of the objective functional
\begin{equation}\label{equ}
\R_{\tau}(A) + q \fro{\MM A-h}_\mathcal{V}
\end{equation}
(as a functional on $\Hankelset$). The objective functional is convex if $q\MM^*\MM\geq I$, and strictly convex if $q\MM^*\MM> I$.
\end{theorem}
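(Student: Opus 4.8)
The plan is to establish the stated equivalence by reducing the fixed-point condition of $\dualproj_{h,\tau,q}^\MM$ to the stationarity condition for \eqref{equ}, and then to read off convexity from the Hessian-type structure of the objective. First I would observe that the operator $\MM$ plays a role analogous to that of the identity in the unweighted setting of Theorem \ref{uli} and Theorem \ref{buli}: the term $q\fro{\MM A - h}_\mathcal{V}$ replaces $q\fro{A-F}$, and the natural surrogate for $F$ is the point $\MM^*(h-\MM A)+A$, which is exactly the argument appearing inside $\soc_{\runS}$ in the definition of $W'$. This suggests writing $\tilde F = \tilde F(A) := \MM^*(h-\MM A)+A$, so that the defining equations of $\dualproj^\MM$ become formally identical to the unweighted operator $\dualproj_{\tilde F, \tau, q}$ evaluated at a base point that itself depends on $A$.

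Next I would write out the fixed-point equations $(W^\circ, A^\circ) = \dualproj^\MM(W^\circ, A^\circ)$ explicitly and manipulate them. The equation for $A^\circ$ gives $(q-1)A^\circ = q\tilde F(A^\circ) - \mathcal{P}_\Hankelset(W^\circ)$, which mirrors \eqref{A_W_solution}, while the equation for $W^\circ$ gives $W^\circ = \soc_{\runS}(q\tilde F(A^\circ) + \mathcal{P}_{\Hankelset^\perp}(W^\circ))$, mirroring the fixed-point identity for $\dualproj$ in the proof of Theorem \ref{uli}. The key step is then to compute the subdifferential (or gradient, where it exists) of the objective \eqref{equ} over $\Hankelset$ and to show that its vanishing is equivalent to the simultaneous satisfiability of these two equations for some $W^\circ$. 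Concretely, the stationarity condition for $\R_\tau(A) + q\fro{\MM A - h}_\mathcal{V}$ on $\Hankelset$ reads $\mathcal{P}_\Hankelset\big(\partial \R_\tau(A^\circ)\big) + 2q\,\mathcal{P}_\Hankelset\big(\MM^*(\MM A^\circ - h)\big) \ni 0$. I would match $\partial\R_\tau$ against the singular value functional calculus underlying $\soc_{\runS}$ — using Lemma \ref{lemma:runS} and the characterization in Theorem \ref{thm:A_structure} of how the singular values of $A^\circ$ relate to those of $W^\circ$ — to show that a fixed point encodes precisely a subgradient of $\R_\tau$ at $A^\circ$ lying in the required relation, and conversely that any stationary $A^\circ$ can be completed to a fixed point by choosing $W^\circ$ appropriately through the $\soc_{\runS}$ relation.

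For the convexity claims I would argue separately and more directly. The function $A \mapsto \R_\tau(A)$ is convex (it is a convex envelope, by the discussion following \eqref{probmod} and Theorem \ref{kalle}), so the only non-convex-looking contribution is controlled by the quadratic term. Writing $q\fro{\MM A - h}_\mathcal{V}$ and expanding, the second-order behavior in $A$ is governed by the operator $q\MM^*\MM$ on $\Hankelset$. Since $\R_\tau(A) = \R_\tau(A) + \fro{A} - \fro{A}$ and $\R_\tau(A)+\fro{A}$ is convex while $-\fro{A}$ contributes $-I$ to the curvature, the total objective is convex as soon as $q\MM^*\MM - I \geq 0$, and strictly convex when $q\MM^*\MM - I > 0$; this is a routine verification once the splitting $\R_\tau(A) = (\R_\tau(A)+\fro{A}) - \fro{A}$ is in place, together with the known convexity of $\R_\tau + \|\cdot\|^2$.

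The main obstacle I anticipate is the first equivalence rather than the convexity: carefully handling the subdifferential of $\R_\tau$ at singular values exactly equal to $\tau$ (where $\R_\tau$ is not differentiable and the middle case of \eqref{sigmaj(A)} allows a range of values for $\sigma_j(A^\circ)$), and ensuring the non-expansiveness hypothesis $\MM^*\MM \leq I$ is exactly what is needed to keep $\soc_{\runS}$ applied to $\tilde F(A)$ well-behaved, so that the fixed-point correspondence is genuinely an ``if and only if'' and not merely one direction. I would treat the smooth case first and then address the boundary singular values through the same von Neumann inequality argument used in Theorem \ref{thm:A_structure}.
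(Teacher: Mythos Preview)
Your convexity argument is essentially identical to the paper's: both split $\R_\tau(A)+q\|\MM A-h\|^2$ as $(\R_\tau(A)+\|A\|^2)+\langle (q\MM^*\MM-I)A,A\rangle+\text{(affine)}$ and read off convexity from $q\MM^*\MM\geq I$.

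For the equivalence, however, the paper takes a shorter route that bypasses the subdifferential computation you propose. You correctly identify $\tilde F(A^\circ)=\MM^*(h-\MM A^\circ)+A^\circ$ and the formal analogy with $\dualproj_{\tilde F}$, but then plan to match $\partial\R_\tau$ against the $\soc_{\runS}$ structure directly. The paper instead observes that the auxiliary quadratic $q\|\sqrt{I-\MM^*\MM}(A-A^\circ)\|^2$ has zero differential at $A=A^\circ$, so adding it to \eqref{equ} does not change the stationarity condition at $A^\circ$; and after adding it, the functional \emph{rewrites exactly} as
\[
\R_\tau(A)+q\|A-\tilde F(A^\circ)\|^2+r(A^\circ,h),
\]
which is the standard form \eqref{Lagrange_final} with $F=\tilde F(A^\circ)$. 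Both implications then follow by invoking Theorem~\ref{buli} as a black box: this auxiliary functional is strictly convex (since $q>1$), so $A^\circ$ is stationary for it iff $A^\circ$ is its unique minimizer, iff $A^\circ=\frac{1}{q-1}(q\tilde F(A^\circ)-\mathcal{P}_\Hankelset(W^\circ))$ for some fixed point $W^\circ$ of $\dualproj_{\tilde F(A^\circ)}$, which is precisely the fixed-point condition for $\dualproj^\MM$. This also clarifies where $\MM^*\MM\leq I$ is actually used: it is needed only so that $\sqrt{I-\MM^*\MM}$ is well defined, not for any Lipschitz or ``well-behavedness'' property of $\soc_{\runS}$ as you suggest.

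Your direct subdifferential plan can be made to work, but it would essentially re-derive the content of Theorems~\ref{buli} and~\ref{thm:A_structure} in this setting. In particular, the step you single out as the main obstacle---given a stationary $A^\circ$, \emph{construct} a $W^\circ$ making $(W^\circ,A^\circ)$ a fixed point---is not something one does by hand via $\soc_{\runS}$; it is exactly what Theorem~\ref{buli} (through Theorem~\ref{uli}) already supplies once the problem has been put in the form $\R_\tau(A)+q\|A-\tilde F(A^\circ)\|^2$. So the efficient move is the completion-of-squares reduction, after which no further singular-value analysis is needed.
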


\begin{proof}
The statements about convexity follows from the identity
\begin{align*}
&\R_{\tau}(A) + q \fro{\MM A-h}_\mathcal{V}=\\&\R_{\tau}(A) +\fro{A}_{\mathbb{M}_{M,N}}+ \langle (q\MM^*\MM-I)A,A \rangle_{\mathbb{M}_{M,N}}+2q\mathsf{Re} \scal{\MM A,h}_{\mathcal{V}}+\fro{h}_\mathcal{V},
\end{align*}
and the fact that $\R_{\tau}(A)$ is defined so that $\R_{\tau}(A)+ \fro{A}$ becomes convex (but not strictly convex).
A stationary point $A^{\circ}$ of \eqref{equ} is clearly stationary also for
\begin{equation}\label{equ1}
\R_{\tau}(A) +q\| \MM A-h\|^2_\mathcal{V}+q\| \sqrt{I-\MM^*\MM}(A-A^{\circ})\|^2,\quad A\in \Hankelset.
\end{equation}
The objective function can be rewritten
\begin{equation}\label{equ2}
\R_{\tau}(A) + q\|  A-\MM^*h-{(I-\MM^*\MM)}( A^{\circ})\|^2+r(A^{\circ},h),\quad {A \in \Hankelset }.
\end{equation}
where $r$ is a remainder. Note that this is convex in $A$. Theorem \ref{buli} implies that there exists a fixed point $W^{\circ}$ to $\dualproj_{\MM^* (h-{\MM}A^{\circ})+A^{\circ}}$ such that
$$
\frac{1}{q-1}\left( q (\MM^* (h-{\MM}A^{\circ})+A^{\circ})-\mathcal{P}_{\Hankelset}(W^{\circ}) \right)
$$ is a minimizer of \eqref{equ2}. Moreover the theorem says that this minimizer is unique, so there can be no other stationary points to the functional. Hence
$$
A^{\circ}=\frac{1}{q-1}\left( q (\MM^* (h-{\MM}A^{\circ})+A^{\circ})- \mathcal{P}_{\Hankelset}(W^{\circ}) \right),
$$ and thus $(W^{\circ},A^{\circ})$ is a fixed point of $\dualproj_h^{\MM}$.
Conversely, let $(W^{\circ},A^{\circ})$ be a fixed point of $\dualproj_h^{\MM}$. Then, by following the earlier argument backwards, we see that $A^{\circ}$ solves \eqref{equ1}. Since the differential of the last term is zero at $A=A^{\circ}$, the subdifferential of the former two is zero as well, so $A^{\circ}$ is a stationary point of \eqref{equ}.

\end{proof}

\section{Complex frequency estimation}\label{seccfe}
If we let $\Hankelset\subset\mathbb{M}_{M,N}$ be the subspace of Hankel matrices, then the algorithm in Section \ref{secthealg} immediately yields a method for frequency estimation, (see e.g. \cite{rochberg}, \cite{actwIEEE} or \cite{acCar} for the connection between this problem and frequency estimation). We now show how the more general version in Section \ref{secgener} can be used to deal with weights or unequally spaced points. The former case is simpler so we present that separately. We also only consider the case of square matrices, although it is perfectly possible to have them non-square. In our experience, the square case works better though.

Let $\Lambda: \mathbb{C}^{2N-1} \rightarrow \mathbb{M}_{N,N}$  be defined by
\begin{equation}\label{inv}
(\Lambda a)_{j,k} = a_{j+k},
\end{equation}
i.e. the operator that takes a sequence and forms the corresponding Hankel matrix. The adjoint is then given by
$$
(\Lambda^* A)_l = \sum_{\underset{1\le j,k \le N}{j+k=l}} A_{j,k}.
$$
Set \begin{equation}\label{poi}\beta_l=\sum_{\underset{1\le j,k \le N}{j+k=l}} 1,\end{equation} and note that $\Lambda^*\Lambda=\diag(\beta)$. Let $1/\beta$ be the sequence whose $j$th element is $1/\beta_j$, and note that $\diag({1/\beta})\Lambda^*$ is the operator that projects on $\Hankelset$ and then forms the corresponding sequence $a$, i.e. it acts as a (left)-inverse of the operation \eqref{inv}. In the remainder, we will usually write simply $\frac{1}{\beta}\Lambda^*$ in place of $\diag({1/\beta})\Lambda^*$, when there is no risk of confusion. 

%
\begin{corollary} \label{corr_weight}
Let $\mu_{1},\ldots,\mu_{2N-1}$ be positive weights and assume that $q \geq \max \para{\frac{\mu}{\beta}}$ (interpreted pointwise). Set
$$
F=\Lambda\left(\frac{ f \mu}{q \beta} \right),
$$
and set
$$
\U (F,A) =F+A - \Lambda\left( \frac{\mu}{q \beta^2} \Lambda^\ast(A)  \right)
$$
Let $(W',A') = \dualproj_{\mathrm{weight}}(W,A)$ be defined by
\begin{align*}
W' &= \soc_{\runS_{\tau,q}}\Big( q \U(F,A) +\mathcal{P}_{\Hankelset^\perp}(W) \Big),\\
A' &=  \frac{1}{q-1}\left( q \U(F,A) - \mathcal{P}_{\Hankelset}(W') \right).
\end{align*}
Then, there exists $W^{\circ}$ such that $(W^{\circ},A^{\circ})$ is a fixed point of $\dualproj_{\mathrm{weight}}$ if and only if $A^{\circ}=\Lambda(a^{\circ})$ is a stationary point of the objective functional
\begin{equation}\label{equ22}
\R_{\tau}(\Lambda a) + \sum_{l} \mu_{l}\left| a_l-f_l\right|^2.
\end{equation}
The objective functional is convex if $\min \para{ \frac{\mu}{\beta} }\geq 1$, and strictly convex if $\min \para{ \frac{\mu}{\beta} }>1$.
\end{corollary}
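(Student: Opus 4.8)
The plan is to recognize Corollary~\ref{corr_weight} as a concrete instance of the general Theorem~\ref{thmweighted}, and to prove it by constructing an explicit linear operator $\MM$ (together with a target vector $h$) for which the weighted objective functional \eqref{equ22} coincides with the abstract functional \eqref{equ}, and for which the iteration $\dualproj_{\mathrm{weight}}$ reduces to $\dualproj_{h,\tau,q}^\MM$. First I would take the natural candidate: since the weighted discrepancy term is $\sum_l \mu_l |a_l - f_l|^2$ and $a = \frac{1}{\beta}\Lambda^* A$ recovers the generating sequence from a Hankel matrix $A = \Lambda(a)$, I would set $\MM A = \diag(\sqrt{\mu/\beta})\,\frac{1}{\beta}\Lambda^* A$ (viewed as a map $\Hankelset \to \mathcal{V}$ with $\mathcal{V} = \mathbb{C}^{2N-1}$) and $h = \diag(\sqrt{\mu/\beta})\, f$. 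Then $\fro{\MM A - h}_\mathcal{V} = \sum_l \frac{\mu_l}{\beta_l}|a_l - f_l|^2$, and since along Hankel matrices the Frobenius inner product carries the weights $\beta_l$, the factor $1/\beta_l$ is exactly what is needed to turn $q\fro{\MM A - h}$ into $\sum_l \mu_l |a_l - f_l|^2$. This matches \eqref{equ22}.

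Next I would verify the hypothesis $\MM^*\MM \le I$ of Theorem~\ref{thmweighted}. A direct computation using $\Lambda^*\Lambda = \diag(\beta)$ gives $\MM^*\MM = \mathcal{P}_\Hankelset \Lambda \diag(\mu/(q\beta^2))\Lambda^* \mathcal{P}_\Hankelset$ (restricted to $\Hankelset$), whose action on $A = \Lambda(a)$ amounts pointwise to multiplication by $\mu_l/(q\beta_l)$ on the sequence level. The assumption $q \geq \max(\mu/\beta)$ is precisely the statement that these multipliers are bounded by $1$, i.e. $q\MM^*\MM \le I$ rescaled; more carefully, $\MM^*\MM \le I$ holds because each eigenvalue equals $\mu_l/(q\beta_l) \le 1$ under the stated hypothesis. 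With $\MM^*\MM \le I$ confirmed, Theorem~\ref{thmweighted} applies directly and yields the equivalence between fixed points of $\dualproj_{h,\tau,q}^\MM$ and stationary points of \eqref{equ}, hence of \eqref{equ22}.

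The remaining task is bookkeeping: I would show that the abstract update maps $W', A'$ in Theorem~\ref{thmweighted} specialize to the stated $\dualproj_{\mathrm{weight}}$ maps. The key identity to unwind is $\MM^*(h - \MM A) + A = \U(F,A)$. Substituting the candidate $\MM$, $h$, and $F = \Lambda(f\mu/(q\beta))$, one computes $\MM^* h = \Lambda(\mu f/(q\beta^2))\cdot(\text{normalization})$ collapsing to $F$ after the $1/\beta$ factors are tracked, while $\MM^*\MM A = \Lambda\big(\frac{\mu}{q\beta^2}\Lambda^*(A)\big)$, so that $\MM^*(h-\MM A) + A = F + A - \Lambda\big(\frac{\mu}{q\beta^2}\Lambda^*(A)\big) = \U(F,A)$. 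Given this, the formulas for $W'$ and $A'$ transcribe verbatim. Finally the convexity claims follow from Theorem~\ref{thmweighted}: since $q\MM^*\MM \ge I$ is equivalent to $\mu_l/\beta_l \ge 1$ for all $l$, convexity holds under $\min(\mu/\beta) \ge 1$ and strict convexity under the strict inequality.

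I expect the main obstacle to be the careful handling of the two different inner products — the Frobenius product on $\mathbb{M}_{N,N}$ versus the Euclidean product on $\mathbb{C}^{2N-1}$ — since the anti-diagonal counting weights $\beta_l$ enter the isometry between $\Hankelset$ and sequence space, and one must place the factors $\sqrt{\mu/\beta}$, $1/\beta$, and $1/\beta^2$ correctly so that $\MM^*$ comes out exactly as needed to produce $\U(F,A)$. Once the adjoint $\MM^*$ is computed with the right weights, everything else is a direct specialization of the already-proven general theorem.
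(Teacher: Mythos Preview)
Your strategy is exactly the paper's: realize the corollary as an instance of Theorem~\ref{thmweighted} by exhibiting an explicit $\MM$ and $h$, then check the operator inequalities and unwind $\MM^*(h-\MM A)+A=\U(F,A)$. However, your specific choice $\MM A=\diag(\sqrt{\mu/\beta})\,\tfrac{1}{\beta}\Lambda^* A$, $h=\diag(\sqrt{\mu/\beta})\,f$ is mis-normalized. With it one gets $\MM\Lambda a=\sqrt{\mu/\beta}\,a$ and hence
\[
q\,\fro{\MM A-h}_{\mathcal V}=q\sum_l\frac{\mu_l}{\beta_l}\,|a_l-f_l|^2,
\]
which is \emph{not} $\sum_l\mu_l|a_l-f_l|^2$; your sentence invoking ``the Frobenius inner product carries the weights $\beta_l$'' does not rescue this, because the norm in \eqref{equ} is the Euclidean norm on $\mathcal V=\mathbb{C}^{2N-1}$, not a Frobenius norm. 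Correspondingly, your $\MM$ yields eigenvalues $\mu_l/\beta_l^{2}$ for $\MM^*\MM$ (not $\mu_l/(q\beta_l)$) and $\MM^*h=\Lambda(\mu f/\beta^{2})\neq F$, so the later identities you assert do not follow from the $\MM$ you wrote down.

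The fix, and the paper's choice, is $\MM=\dfrac{\sqrt{\mu}}{\sqrt{q}\,\beta}\Lambda^*$ and $h=\dfrac{\sqrt{\mu}}{\sqrt{q}}\,f$. Then $\MM\Lambda a=\sqrt{\mu/q}\,a$, so $q\,\fro{\MM A-h}_{\mathcal V}=\sum_l\mu_l|a_l-f_l|^2$ exactly. In the orthonormal basis $E_l=\Lambda(\beta_l^{-1/2}e_l)$ of $\Hankelset$ the operator $\MM^*\MM$ is diagonal with entries $\mu_l/(q\beta_l)$, giving $\MM^*\MM\le I\Leftrightarrow q\ge\max(\mu/\beta)$ and $q\,\MM^*\MM\ge I\Leftrightarrow\min(\mu/\beta)\ge 1$; moreover $\MM^*h=\Lambda(\mu f/(q\beta))=F$ and $\MM^*\MM A=\Lambda\big(\tfrac{\mu}{q\beta^{2}}\Lambda^*A\big)$, whence $\MM^*(h-\MM A)+A=\U(F,A)$ and $\dualproj_{\mathrm{weight}}=\dualproj_{h,\tau,q}^{\MM}$. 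With this single correction your outline goes through verbatim and coincides with the paper's proof.
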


\begin{proof}
We apply Theorem \ref{thmweighted} with $\MM=\frac{\sqrt{\mu}}{{\sqrt{q}\beta}}\Lambda^{^*}$ and $h=\frac{\sqrt{\mu}}{{\sqrt{q}}}f$. Since $\MM\Lambda a=\frac{\sqrt{\mu}}{{\sqrt{q}\beta}}\beta a=\frac{\sqrt{\mu}}{{\sqrt{q}}}a$, it is easy to see that the objective functional \eqref{equ} transforms into \eqref{equ22} when applied to the variable $A=\Lambda(a)$. Let $e_l\in\mathbb{C}^{2N-1}$ be the vector with value $\beta_l^{-1/2}$ on the $l$:th position and zeroes elsewhere, and define $E_l=\Lambda(e_l)$. Note that $(E_l)_{l=1}^{2N-1}$ is a orthonormal basis in $\Hankelset$, and that $\MM^{*}\MM$ with respect to this basis turns into a diagonal matrix whose diagonal elements are ${\frac{\mu_l}{q \beta_l}}$. By this it easily follows that the conditions on $\MM$ in Theorem \ref{thmweighted} transforms into those stated above. It also follows that
\begin{equation}\label{r5}
\MM^*\MM A=\Lambda\left( \frac{\mu}{q \beta^2} \Lambda^\ast(A)  \right),
\end{equation}
and that
$$
\MM^*h=\Lambda \frac{\sqrt{\mu}}{{\sqrt{q}\beta}}\frac{\sqrt{\mu}}{{\sqrt{q}}}f.
$$
Consequently, it holds that
$$
\MM^* (h-\MM A) + A = U(A).
$$
and therefore $\dualproj_{\mathrm{weight}}=\dualproj_{h,\tau,q}^\MM$.
\end{proof}

Given a set of sample points $X = \{x_j\}_{j=1}^J$, let $\mathcal{V}$ be the space of functions on the set $X$. We think of these as sequences on $X$, and make identification with $\mathbb{C}^{J}$. Similarly, let $Y$ be an equally spaced grid near $X$, and let functions on $Y$ be identified with $\mathbb{C}^{2N-1}$. Let ${I}_X$ be an operator that interpolates between functions on $Y$ and functions on $X$. The action of its adjoint ${I}_X^\ast$ is sometimes referred to as \emph{anterpolation}.

\begin{corollary} \label{corr_us}
Let $\mu_{1},\ldots,\mu_{J}$ be positive weights and choose $q$ such that $$\Lambda \beta^{-1} I_X^*{{\mu}}I_X \beta^{-1}  \Lambda^*\leq q I.$$
Let
$$
F=\Lambda\left(  \frac{1}{\beta} I_X^*\para{\frac{\mu f}{q}} \right),
$$
and let
$$
\U (F,A) =F+A -  \Lambda\para{ \frac{1}{q \beta} I_X^* \para{ \mu I_X \para{ \frac{1}{\beta} \Lambda^{\ast}\para{A}}} }.
$$
Let $(W',A') = \dualproj_{\mathrm{us}}(W,A)$ be defined by
\begin{align*}
W' &= \soc_{\runS_{\tau,q}}\Big( q \U(A) +\mathcal{P}_{\Hankelset^\perp}(W) \Big),\\
A' &=  \frac{1}{q-1}\left( q \U(F,A) - \mathcal{P}_{\Hankelset}(W') \right).
\end{align*}
Then, there exists $W^{\circ}$ such that $(W^{\circ},A^{\circ})$ is a fixed point of $\dualproj_{\mathrm{us}}$ if and only if $A^{\circ}=\Lambda(a^{\circ})$ is a stationary point of the objective functional
\begin{equation}\label{equ22256}
\R_{\tau}(\Lambda a) + \sum_{j} \mu_{j}\left| (I_X a-f)_j\right|^2.
\end{equation}
The objective functional is convex if $\Lambda \beta^{-1} I_X^*{{\mu}}I_X \beta^{-1}  \Lambda^*\geq I$, and strictly convex if the inequality is strict.
\end{corollary}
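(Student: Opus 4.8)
The plan is to apply Theorem \ref{thmweighted} along the same lines as the proof of Corollary \ref{corr_weight}, but with the interpolation operator $I_X$ inserted into the measurement map. Concretely I would take
$$\MM=\frac{\sqrt{\mu}}{\sqrt{q}}\, I_X\,\frac{1}{\beta}\Lambda^{*}\ \text{ (restricted to }\Hankelset\text{)},\qquad h=\frac{\sqrt{\mu}}{\sqrt{q}}\,f,$$
where $\sqrt{\mu}$ denotes pointwise multiplication by $\sqrt{\mu_j}$ on $\mathbb{C}^{J}$. The starting observations are the two elementary Hankel identities $\frac{1}{\beta}\Lambda^{*}\Lambda=I$ on sequences and $\Lambda\frac{1}{\beta}\Lambda^{*}=\mathcal{P}_{\Hankelset}$, both immediate from $\Lambda^{*}\Lambda=\diag(\beta)$. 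Using the first, $\MM\Lambda a=\frac{\sqrt{\mu}}{\sqrt{q}}I_X a$, so that for $A=\Lambda(a)$ one has $q\fro{\MM A-h}_{\mathcal V}=\sum_j\mu_j|(I_X a-f)_j|^2$; this shows that the abstract objective \eqref{equ} becomes exactly \eqref{equ22256}.

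Next I would identify the iteration operator. Since the diagonal factors are self-adjoint and $(\frac{1}{\beta}\Lambda^{*})^{*}=\Lambda\frac{1}{\beta}$ as a map into $\Hankelset$, one obtains $\MM^{*}=\Lambda\frac{1}{\beta}I_X^{*}\frac{\sqrt{\mu}}{\sqrt{q}}$. Direct substitution then gives $\MM^{*}h=\Lambda\para{\frac{1}{\beta}I_X^{*}\para{\frac{\mu f}{q}}}=F$ and $\MM^{*}\MM A=\Lambda\para{\frac{1}{q\beta}I_X^{*}\,\mu\,I_X\frac{1}{\beta}\Lambda^{*}A}$, whence $\MM^{*}(h-\MM A)+A=\U(F,A)$. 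Comparing with the definition of $\dualproj_{h,\tau,q}^{\MM}$ in Section \ref{secgener}, this yields $\dualproj_{\mathrm{us}}=\dualproj_{h,\tau,q}^{\MM}$, so the fixed-point characterization follows verbatim from Theorem \ref{thmweighted}.

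It then remains to translate the hypotheses $\MM^{*}\MM\le I$ and the convexity conditions $q\MM^{*}\MM\ge I$ (resp. strict). Working in the orthonormal basis $E_l=\Lambda(e_l)$ of $\Hankelset$ used in the proof of Corollary \ref{corr_weight}, I would show that $\MM^{*}\MM$ is represented by $\frac{1}{q}\diag(\beta^{-1/2})I_X^{*}\mu I_X\diag(\beta^{-1/2})$; equivalently, as an operator on $\Hankelset$, $q\MM^{*}\MM=\Lambda\beta^{-1}I_X^{*}\mu I_X\beta^{-1}\Lambda^{*}$, so the three stated conditions are precisely $\MM^{*}\MM\le I$, $q\MM^{*}\MM\ge I$ and $q\MM^{*}\MM>I$. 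The only delicate bookkeeping point, and the place where the computation differs nontrivially from Corollary \ref{corr_weight}, is the handling of the non-orthogonal interpolation operator $I_X$ and its adjoint together with the two distinct $\beta$-normalizations. A related subtlety is that, because $\Lambda^{*}$ annihilates $\Hankelset^{\perp}$ and $\Lambda$ maps into $\Hankelset$, the operator $\Lambda\beta^{-1}I_X^{*}\mu I_X\beta^{-1}\Lambda^{*}$ degenerates on $\Hankelset^{\perp}$; its block structure (mapping $\Hankelset^{\perp}\to 0$ and $\Hankelset\to\Hankelset$) shows that the displayed operator inequalities are to be read, harmlessly, as inequalities on $\Hankelset$, matching the identity on $\Hankelset$ appearing in Theorem \ref{thmweighted}.
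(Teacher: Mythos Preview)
Your proposal is correct and follows exactly the route the paper takes: the paper's own proof is a one-line reference saying to argue analogously to Corollary~\ref{corr_weight} with the choices $\MM=\frac{\sqrt{\mu}}{\sqrt{q}}I_X\beta^{-1}\Lambda^{*}$ and $h=\frac{\sqrt{\mu}}{\sqrt{q}}f$, which are precisely your choices. Your write-up simply fills in the computations (the identification of \eqref{equ} with \eqref{equ22256}, of $\MM^{*}(h-\MM A)+A$ with $\U(F,A)$, and of the operator inequalities) that the paper leaves implicit; the remark that the displayed operator inequality is to be read on $\Hankelset$ is a valid clarification.
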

\begin{proof}
The proof is made analogously to the proof of Corollary \ref{corr_weight} by choosing  $h=\frac{\sqrt{\mu}}{{\sqrt{q}}}f$ and $\MM=\frac{\sqrt{\mu}}{\sqrt{q}}I_X \beta^{-1}  \Lambda^*$.
\end{proof}

\section{Multidimensional frequency estimation on general domains}\label{secmultsfe}
We now generalize the above framework to several variables. The most straightforward way to do this would be to introduce block-Hankel matrices. However, we aim at having a framework that is more flexible, and for that reason we will work with \emph{general domain Hankel matrices}. To explain the main idea, note that a Hankel matrix $A$ can be realized as the operator $\bb{\Gamma}_a:\mathbb{C}^N\rightarrow\mathbb{C}^M$ given by \begin{equation}\label{hankeldef1}(\bb{\Gamma}_a b)_m=\sum_{n=0}^{N-1}a_{m+n}b_n,\quad b\in\mathbb{C}^N.\end{equation} The bold notation is chosen for consistency with the notation used in \cite{acCar}. This formulation is suitable to generalize to the several variable setting. Let $\Upsilon$ and $\Xi$ be open bounded and connected domains in $\mathbb{R}^d$. Given sampling lengths $\bb{l}=(l_1,\ldots,l_d)$, consider the following (non-rectangular) grids $$\bb{\Upsilon}=\{\bb l\bb n=(l_1n_1,\ldots,l_d n_d)\in \Upsilon:~\bb n\in\mathbb{Z}^d\},$$ and analogously
$$\bb{\Xi}=\{\bb l\bb n\in \Xi:~\bb n\in\mathbb{Z}^d\},$$ i.e., $\bb{\Upsilon}$ and $\bb{\Xi}$ are grids covering the respective domains, and the grid lengths are implicit in the notation.
\begin{figure}
\begin{center}
\fbox{\includegraphics[width=\linewidth,trim=0in 2in 0in 1.5in]{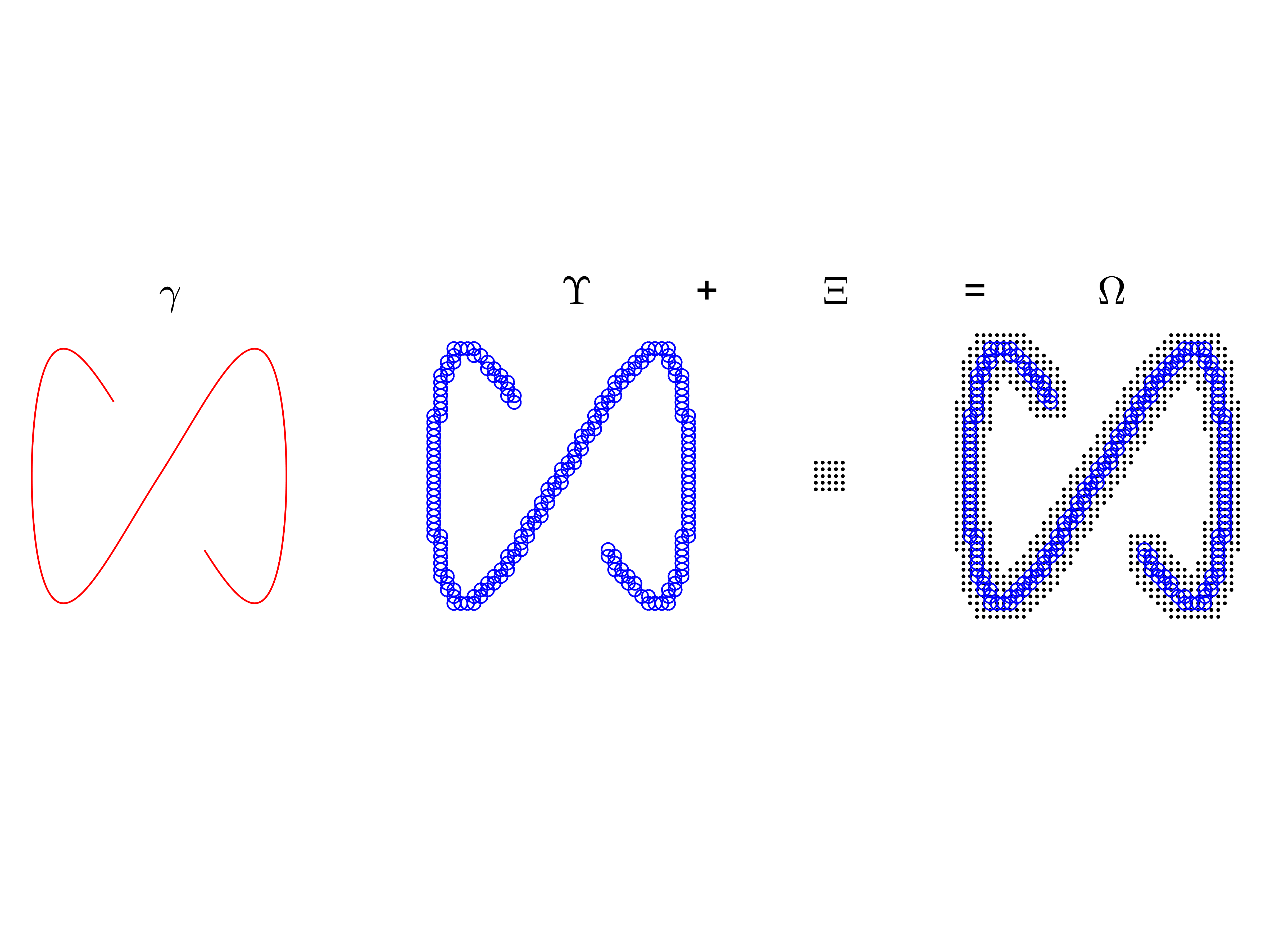}}
\end{center}
\caption{The construction of generalized multidimensional Hankel matrices. Assume that we have data sampled on a set $\gamma$, in this case a curve. The set $\bb\Upsilon$ is the set of nearest neighbor points on a equally spaced grid. The set $\bb\Xi$ is a small rectangular domain and $\bb\Omega$ is obtained as $\bb\Upsilon + \bb\Xi$. \label{fig:gen_domain} }
\end{figure}
We define $\ell^2(\bb{\Upsilon})$ to be the Hilbert space of all functions $b:\bb{\Upsilon}\rightarrow \mathbb{C}$ with the norm $$\|b\|^2=\sum_{\bb{y}\in\bb{\Upsilon}}|b(\bb{y})|^2.$$ Moreover set $$\bb{\Omega}=\bb{\Upsilon}+\bb{\Xi}.$$ Given $a\in\ell^2(\bb{\Omega})$, we define, in analogy with \eqref{hankeldef1}, the general domain Hankel operator as the summing operator $\bb\Gamma_a:\ell^2(\bb{\Upsilon})\rightarrow\ell^2(\bb\Xi)$
given by \begin{equation}\label{hankeldef2}\bb\Gamma_a b(\bb x)=\sum_{\bb{y}\in\bb{\Upsilon}} a(\bb x+\bb y)b(\bb y),\quad \bb x\in \bb \Xi.\end{equation}
We may of course represent $b$ as a vector, by ordering the entries in some (non-unique) way. More precisely, let $|\boldsymbol{\Upsilon}|$ denote the amount of elements in $\boldsymbol{\Upsilon}$, and pick any bijection \begin{equation}\label{bij}o_y:\{1,\ldots,|\boldsymbol{\Upsilon}|\}\rightarrow\boldsymbol{\Upsilon}.\end{equation} The bijection $o_y$ could for example be the lexicographical ordering of the elements in $\boldsymbol{\Upsilon}$. We can then identify $b$ with the vector $\tilde{b}$ given by $$(\tilde{b}_j)_{j=1}^{|\boldsymbol{\Upsilon}|}=b_{o_y(j)}.$$
Letting $o_x$ be an analogous bijection for $\boldsymbol{\Xi}$, it is clear that $\boldsymbol{\Gamma}_{f}$ can be represented
as a matrix, where the $(m,n)$'th element is $f(o_x(m)+o_y(n))$. If both $\Xi$ and $\Upsilon$ are squares in $\mathbb{R}^2$, these matrices are just standard block-Hankel matrices. A larger discussion of their structure is found in \cite{acCar}, see in particular Section 4.

Let $\Lambda:\ell^2(|\bb \Omega|)\rightarrow \mathbb{M}_{|\bb\Xi|,|\bb{\Upsilon}|}$ be the map that sends a function $a$ to its corresponding generalized domain Hankel matrix;$$\Lambda( a)=(a(o_x(m)+o_y(n)))_{m,n},$$ and let $\Hankelset$ be the range of this operator. Theorem \ref{buli} then applies and provides a minimizer to \eqref{Lagrange_final}. Note that
\begin{equation}\label{Lagrange_multidim}\begin{aligned}
&\argmin_{A\in \Hankelset} \R_{\tau} (A) +q \|A-F\|^2=\argmin_{a\in \ell^2(\bb\Omega)} \R_{\tau} (\Lambda(a)) +q \|\Lambda(a)-\Lambda(f)\|^2=\\&\argmin_{a\in \ell^2(\bb\Omega)} \R_{\tau} (\Lambda(a)) +q \sum_{\bb w\in\bb\Omega}\beta(\bb w)|a(\bb{w})-f(\bb{w})|^2,\end{aligned}
\end{equation}
where the weight-function $\beta$ evaluated at a point $\bb w$ simply is the amount of times that the value $a(\bb w)$ appears in the matrix $A=\Lambda(a)$, in analogy with \eqref{poi}.

In Figure \ref{fig:gen_domain} we illustrate how a general domain Hankel matrix could be constructed. Suppose that we have data available along some subset of $\mathbb{R}^2$, in this case along a line $\gamma$. We want to inscribe the curve $\gamma$ in a subset of an equally spaced grid $\bb\Omega$. We would like to factorize the set $\bb\Omega$ as above. The generating values for the general domain Hankel matrix will in this case ``live close'' to where we have data. This is of particular importance if the exponential functions that we want to recover have frequencies with non-zero real part, as they easily can become huge far away from where the samples are available. The shape of the domain $\bb\Xi$ will control how many exponential functions that can be recovered. We refer to \cite{andersson2010nonlinear_exp,andersson2010nonlinear_wp} for details about how to recover the exponentials and the relationship between the size of $\bb\Xi$ and the number of exponential functions that can be recovered.

\section{Numerical simulations}\label{secnum}
\subsection{Equally spaced data}
We begin with studying the case of equally spaced samples without weights. For this case the standard methods for frequency estimation are applicable. We will conduct some comparison of the results obtained by the proposed method against the results obtained by the ESPRIT method \cite{ESPRIT}. One difference between the proposed method and the ESPRIT method is that the number of exponentials sought for is predetermined in ESPRIT, while in the proposed method a penalty level is used instead of prescribing the number of exponentials. By modifying the penalty level $\tau$, i.e., by modifying $\runS_{\tau}$ in (\ref{runS}), we can adapt the fixed point algorithms for making approximations using a fixed number of exponential functions. The simplest way to do this is to choose $\tau$ such that $\tau= q \sigma_K(W)$, if $K$ is the number of exponentials sought for. Note that the fixed point operator is then modified in between iterations, and that the convergence results are not directly applicable.

To avoid this difficulty, we will conduct experiments on sums of exponentials of the form (\ref{Hankel_ell2_org}) with $|c_k|=1$. If the data is contaminated by noise, then the number of exponentials found can easily vary  form small signal to noise ratios, if the strength (i.e. $|c_k|$) of the different exponentials have large variations.

We conduct experiments on a function of the form (\ref{Hankel_ell2_org}) with frequencies $\zeta_k$ and coefficients $c_k$ given in Table \ref{tab:4exp} for $|x|\le \frac{1}{2}$.
\begin{table}[h]
\centering
\caption{Frequencies and coefficients}
\label{tab:4exp}
\begin{tabular}{ | l | l |  l |  l | l|}
\hline
$\zeta_k$ & -23.141i & -3.1416i         & 2.7183i           & 31.006i           \\
\hline
$c_k$   & 1.00000 & 0.62348+0.78183i & -0.22252+0.97493i & -0.90097+0.43388i\\
\hline
\end{tabular}
\end{table}

\begin{figure}
\begin{center}
\includegraphics[width=0.48\linewidth,trim=0in 0in 0in 0in]{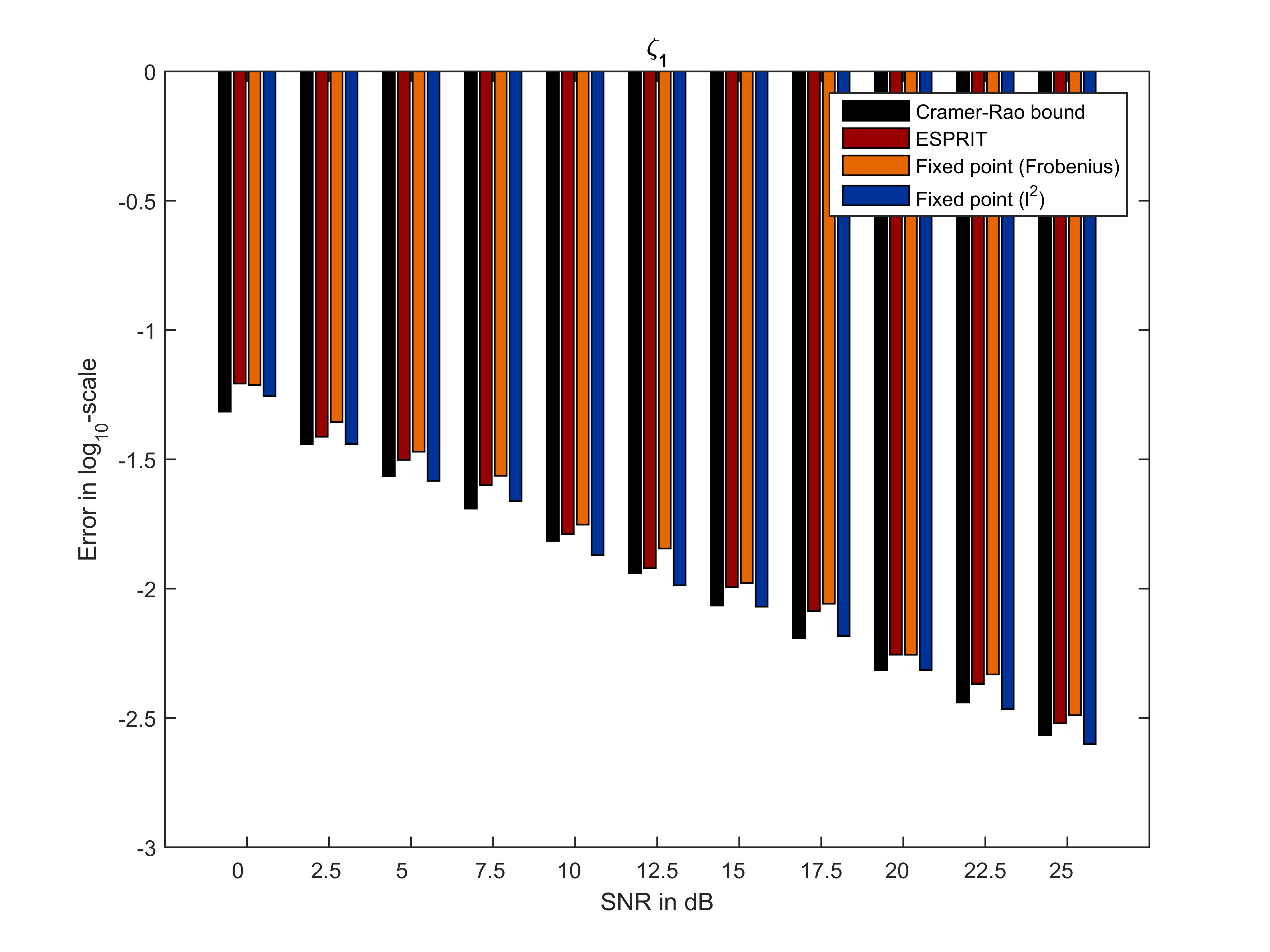}
\includegraphics[width=0.48\linewidth,trim=0in 0in 0in 0in]{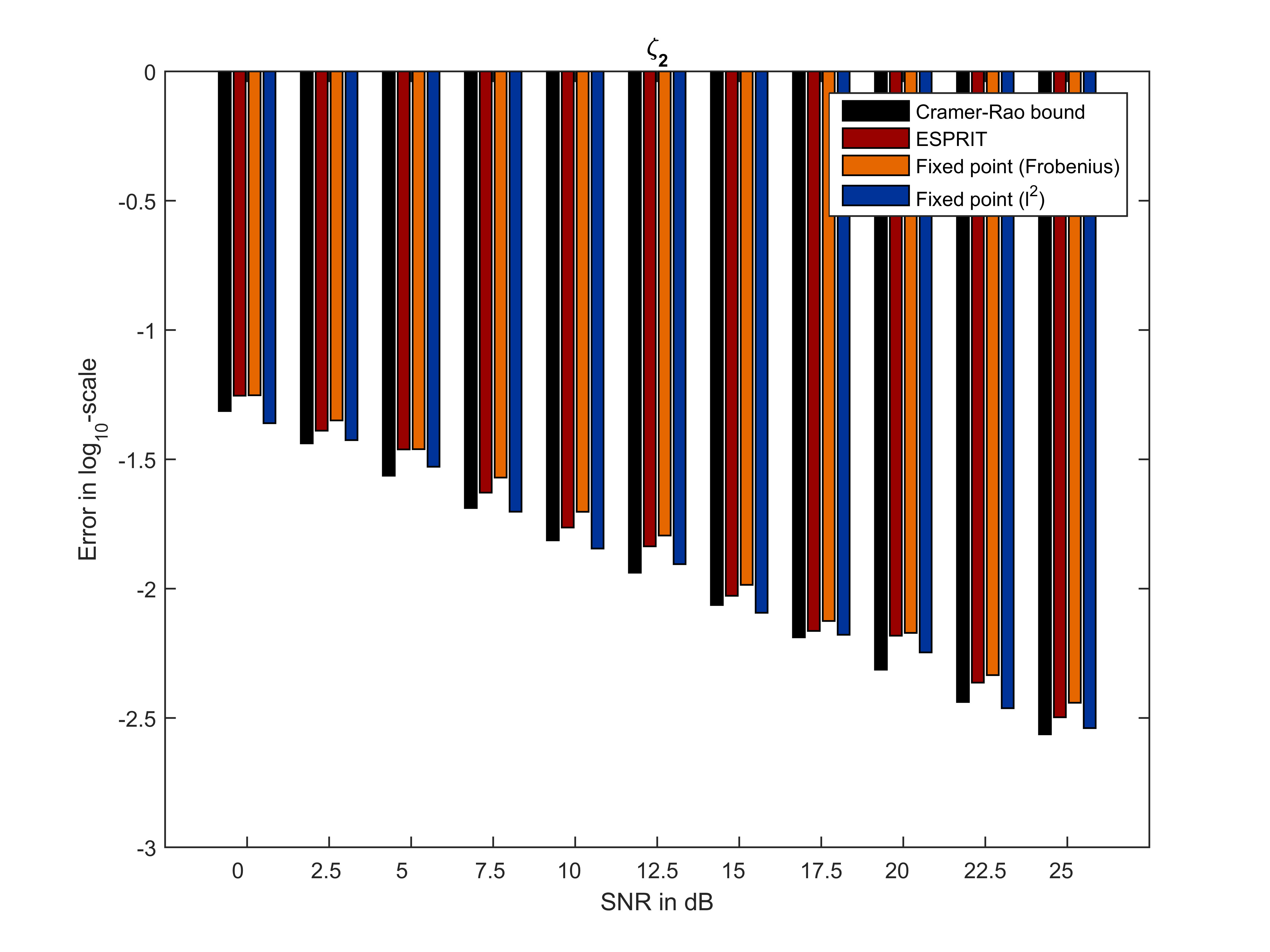}
\includegraphics[width=0.48\linewidth,trim=0in 0in 0in 0in]{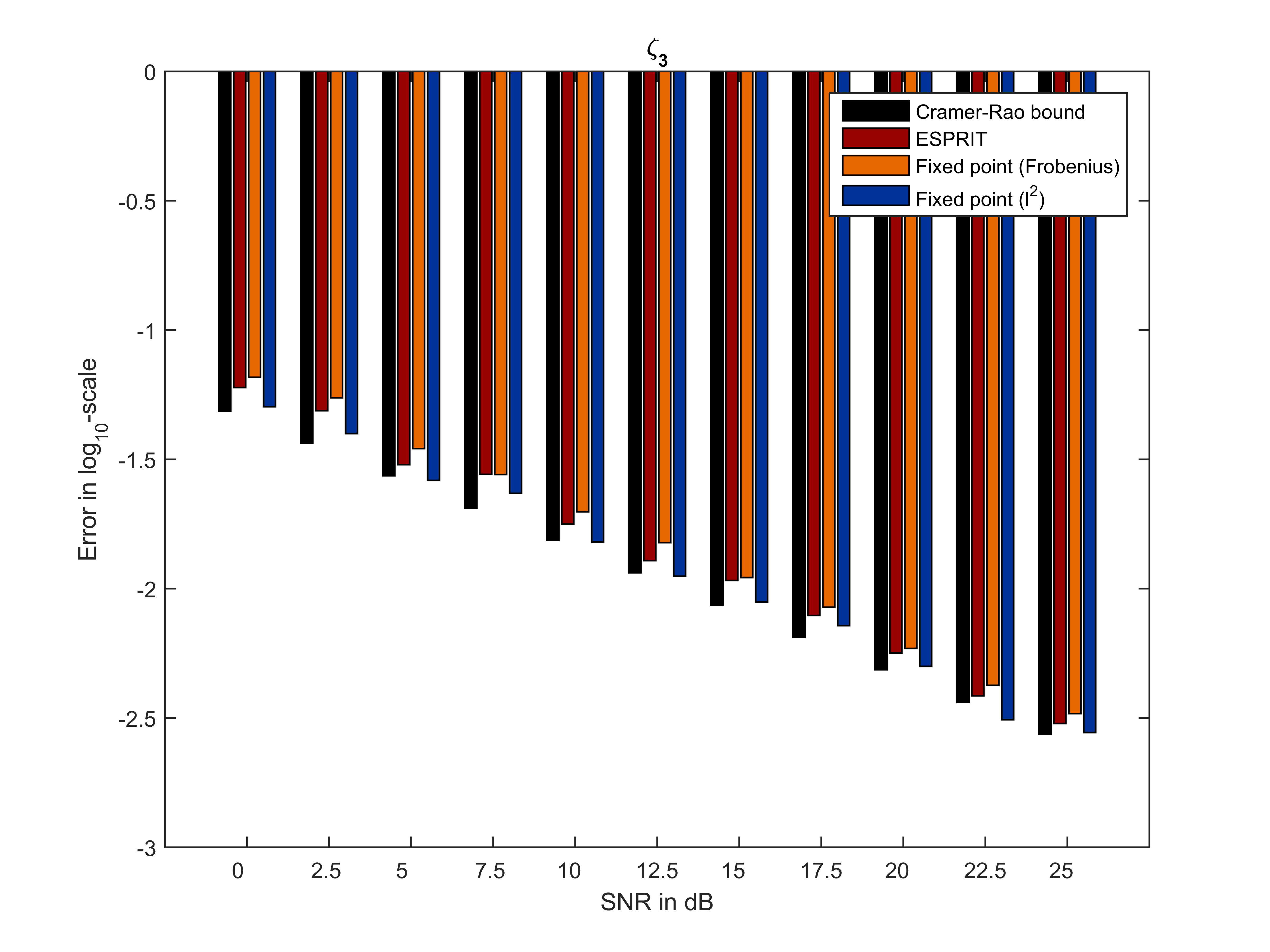}
\includegraphics[width=0.48\linewidth,trim=0in 0in 0in 0in]{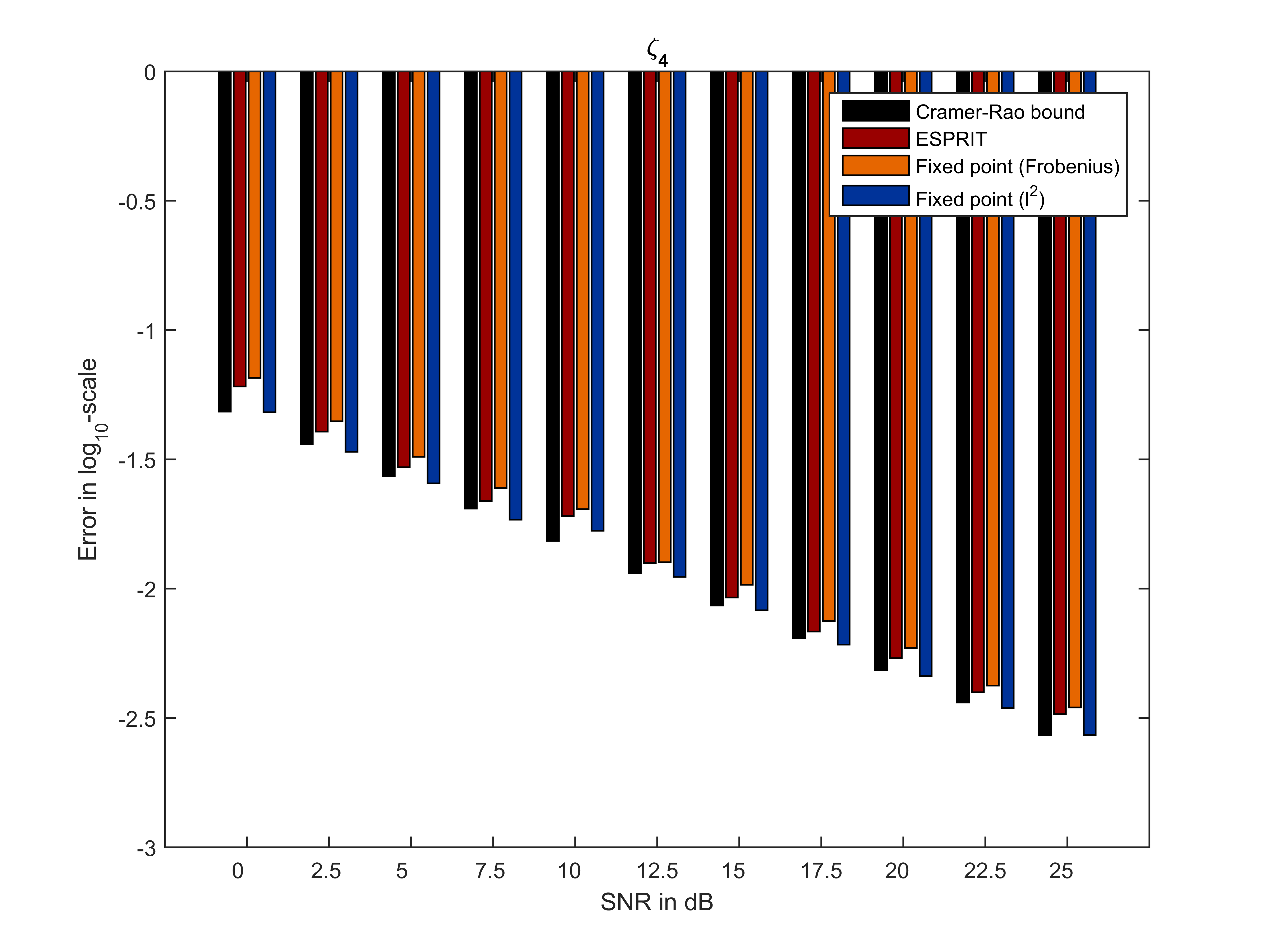}
\end{center}
\caption{Errors in the estimation of $\Re{\zeta_k}$, $k=1, \dots, 4$, for equally spaced sampling using 257 nodes: Cramer-Raó bounds, ESPRIT error, Fixed point-Frobenius, Fixed point-$\ell^2$.\label{CRB}}
\end{figure}
The function is sampled at 257 points, and white noise is added with different signal-to-noise rations (SNR). We test the fixed point algorithm given in Theorem \ref{buli} for a Frobenius norm error, and using the fixed point algorithm given in Corollary \ref{corr_weight}, for the standard $\ell^2$ norm. In Figure \ref{CRB} we show the errors in recovering the frequencies from Table \ref{tab:4exp} for the two fixed point algorithms, along with the errors obtained from ESPRIT and the Cramér-Rao bound for the estimates. We can see that in comparison to the Frobenius based fixed point algorithm, the ESPRIT method works better in estimating the frequencies, but also that the $\ell^2$ version works better than ESPRIT. For this function, all three methods work well, and the obtained errors are close to the Cramér-Rao bound. Note that the methods are trying to optimize the approximation error of the function using few exponentials rather than estimating the frequencies present in the signal.

In Figure \ref{fig:frol2err} we compare the approximation errors for the two methods against that or ESPRIT. The errors depicted are
$$
\| A_{\mathrm{ESPRIT}} - F\|_{F} -\| A_{\mathrm{Fro}} - F \|_{F},
$$
and
$$
\| a_{\mathrm{ESPRIT}} - f\|_{\ell^2} -\| a_{\ell^2} - f\|_{\ell^2},
$$
respectively, where $A_{\mathrm{Fro}}$  denotes the outcome from the  fixed-point algorithm in Theorem \ref{buli}, and $a_{\ell^2}$ the output of the fixed-point algorithm in Corollary \ref{corr_weight}. The errors are computed for the 11 different SNR values of Figure \ref{CRB} using 100 simulation for each value. The errors are then normalized in relation to the SNR for the sake of visualizing them simultaneously. The scale on the $y$-axis is thus a relative scale, and we have rescaled it so that the largest difference between the errors produced by the different methods is one. We can see that for both cases the errors are positive, meaning that the approximations obtained by the two fixed point methods consequently produce approximations with smaller error in the respective norm.
\begin{figure}
\begin{center}
\includegraphics[width=0.48\linewidth,trim=0in 0in 0in 0in]{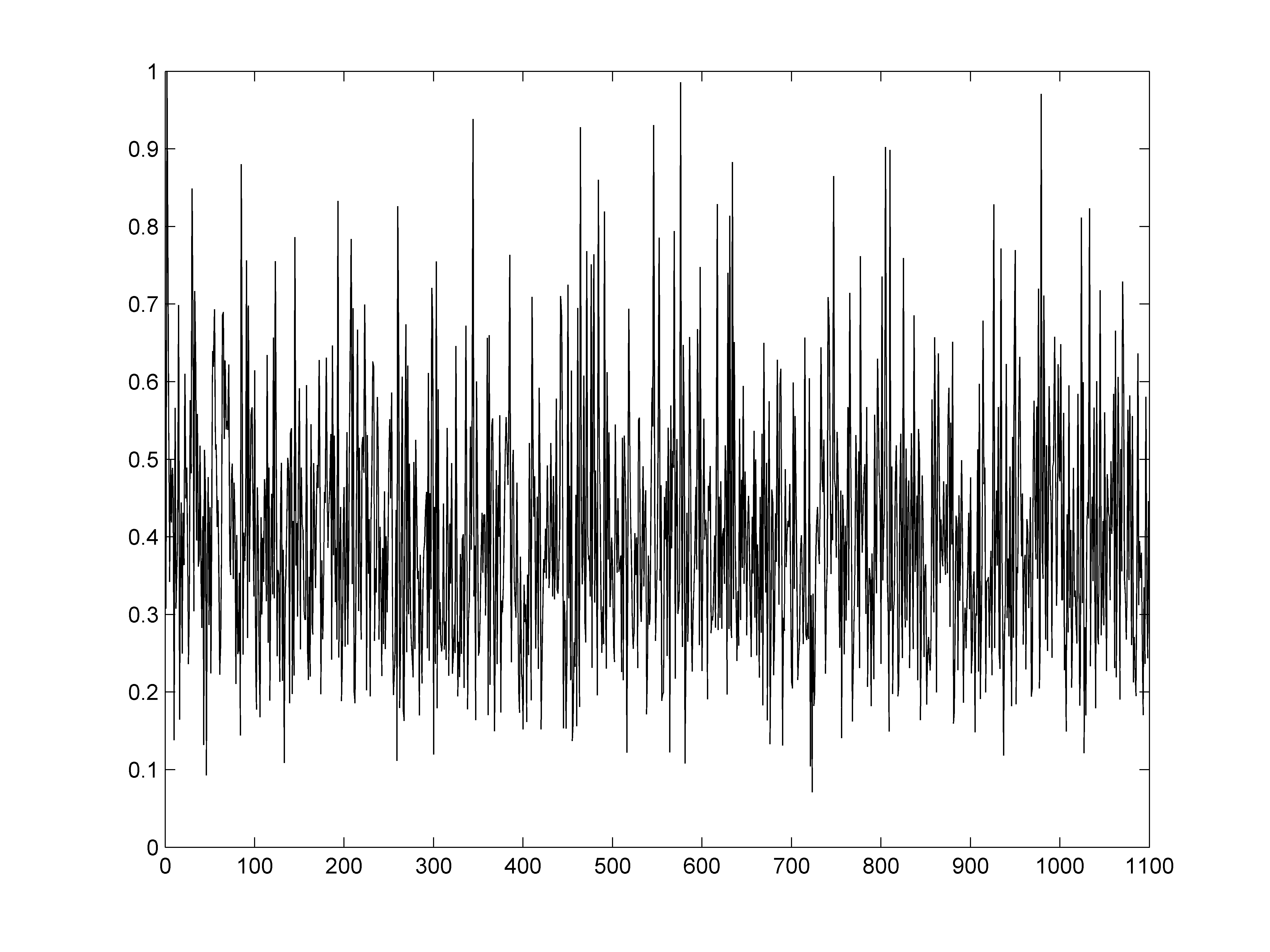}
\includegraphics[width=0.48\linewidth,trim=0in 0in 0in 0in]{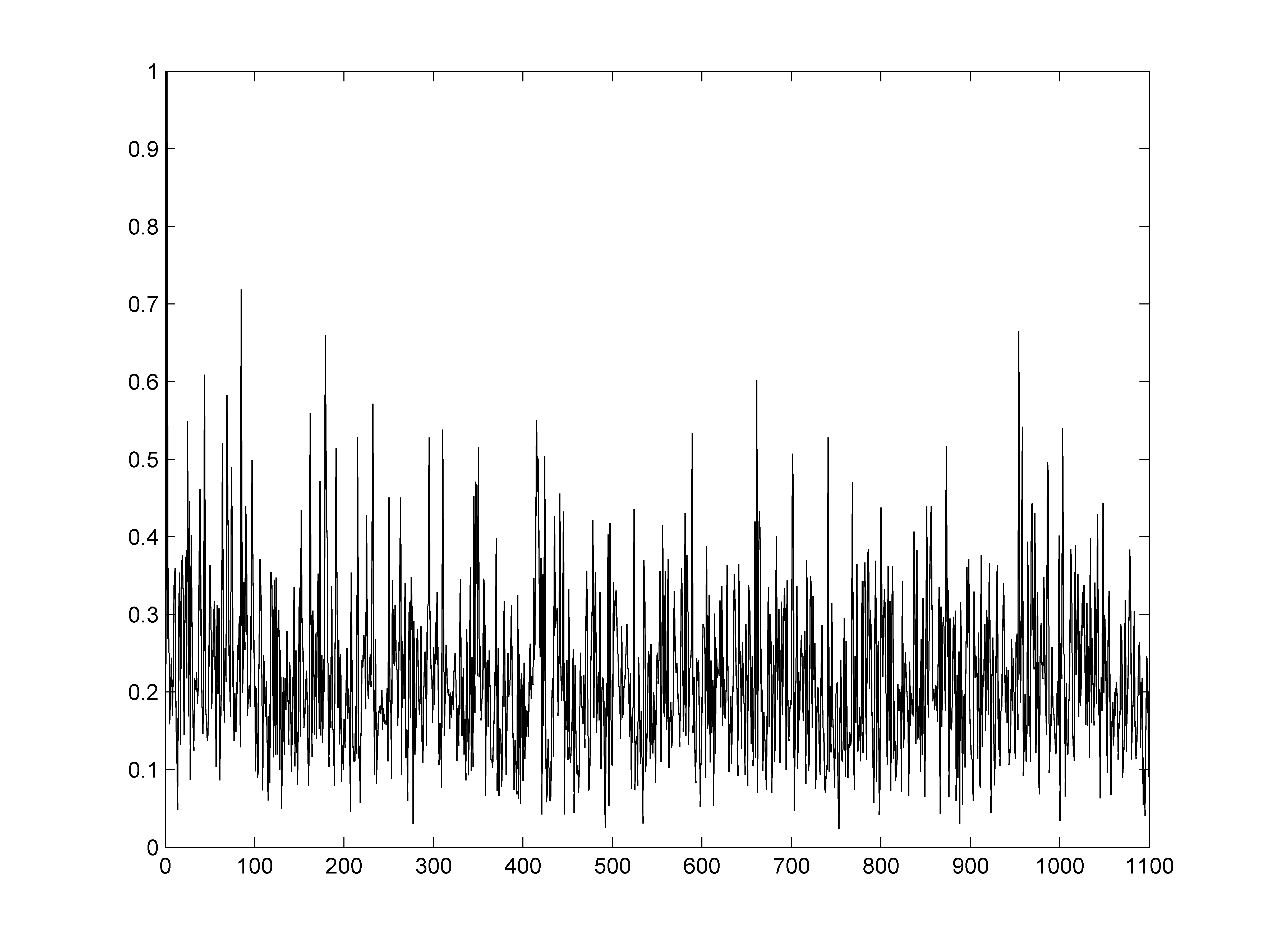}
\end{center}
\caption{Errors in the estimation of $\Re{\zeta_k}$ for equally spaced sampling using 257 nodes: Left comparison of the error produced by ESPRIT and the fixed-point algorithm given in Theorem \ref{buli}; Right comparison of the error produced by ESPRIT and the fixed-point algorithm in Corollary \ref{corr_weight} \label{fig:frol2err}}.
\end{figure}

\subsection{Missing data}
\begin{figure}
\begin{center}
\includegraphics[width=0.45\linewidth,trim=1.3in .4in .4in .4in]{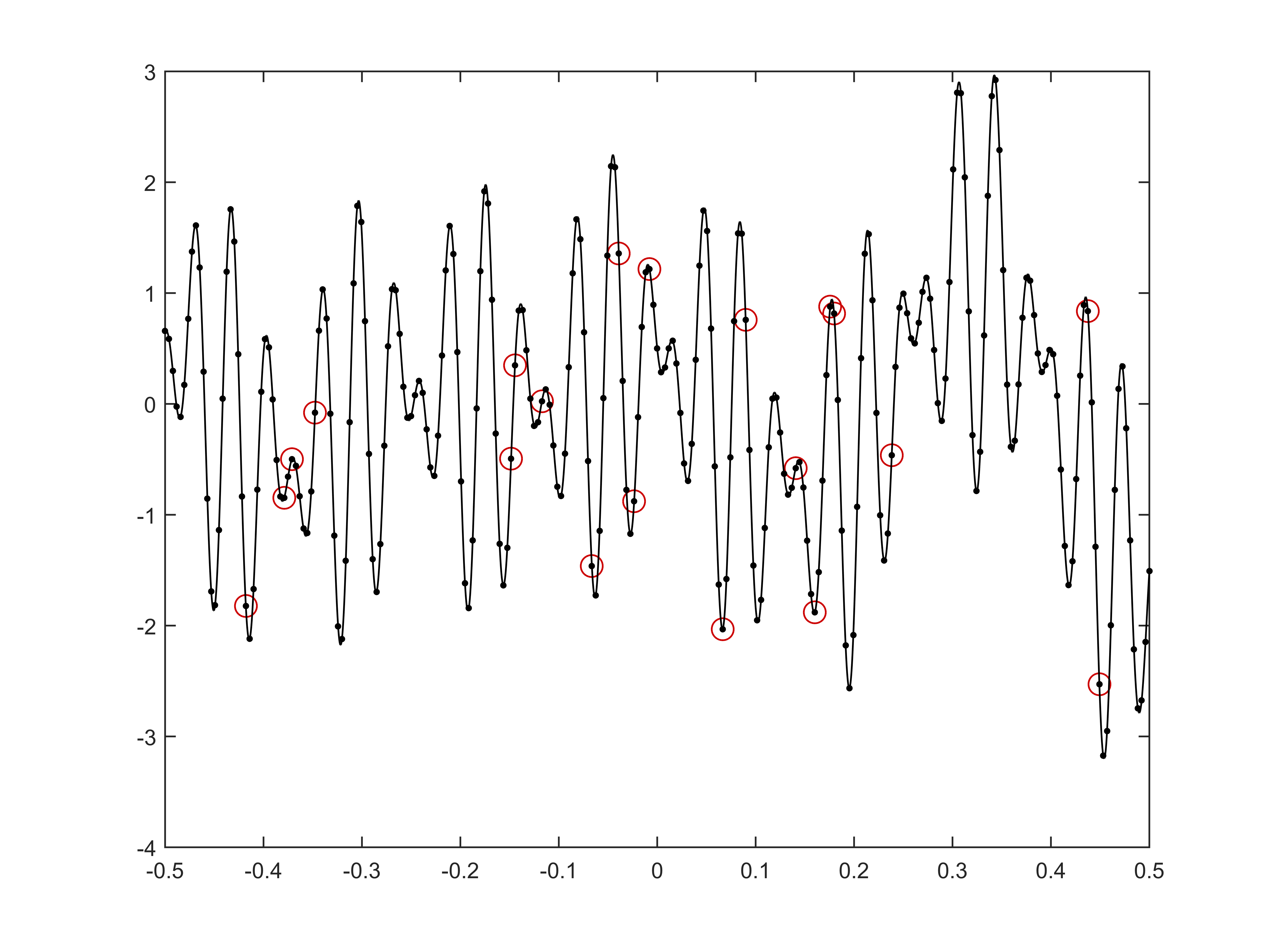}
\includegraphics[width=0.45\linewidth,trim=.4in .4in 1.3in .4in]{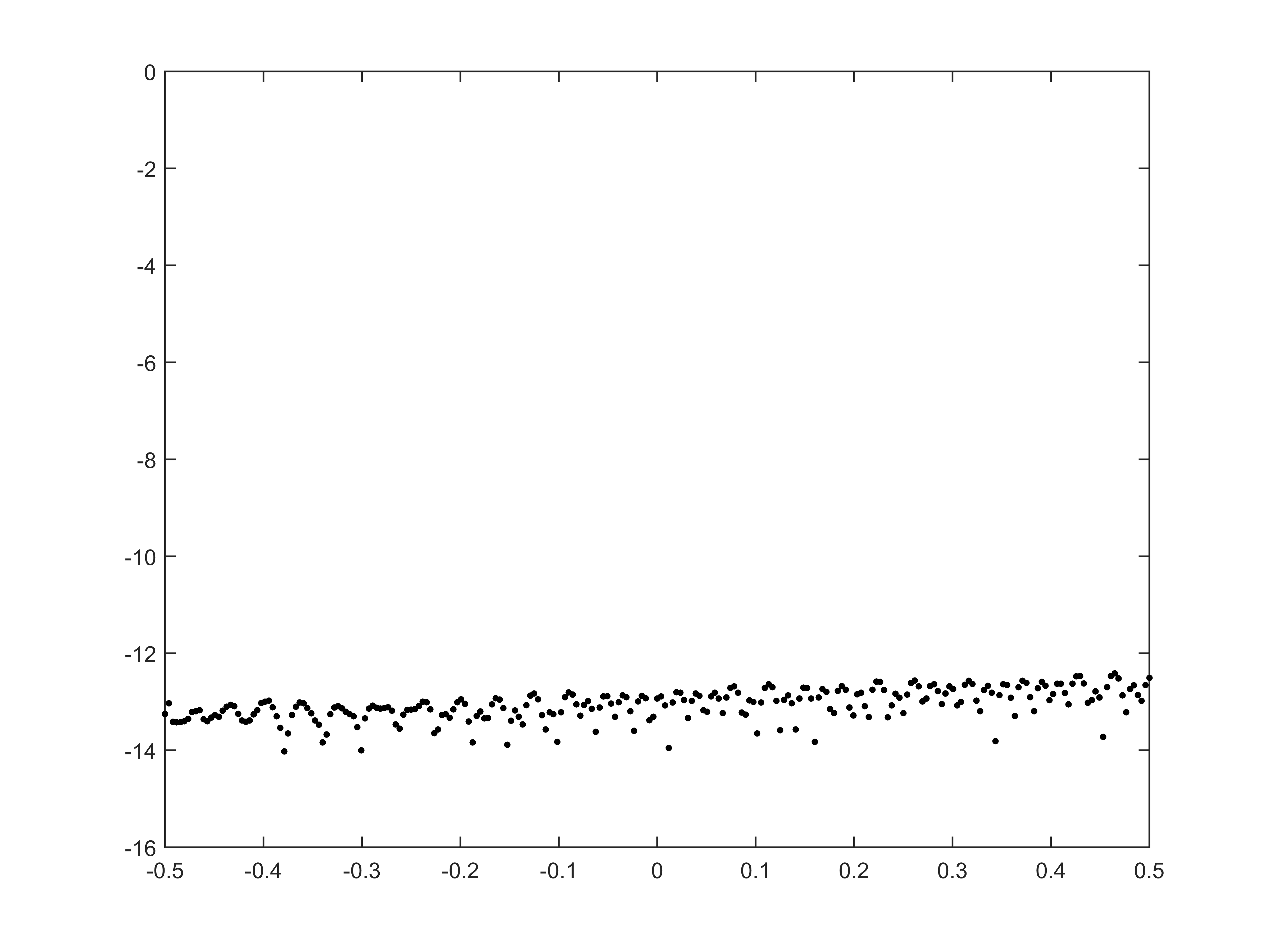}
\end{center}
\caption{Missing data problem. The right panel shows the true signal in black, with equally spaced samples indicated by black dots, and the available samples indicated by red circles. The right panel shows the error between the original function and the reconstructed function in $\log_{10}$-scale\label{fig:missing_data_perfect}}
\end{figure}
In this subsection we study the performance of the proposed method in the case of missing data. We assume that the samples are known only at a subset of an equally spaced sampling in one dimension. The function we study is the same as before, i.e. composed from the coefficients and frequencies given in Table \ref{tab:4exp}.  In the first test, we try to recover the function in the absence of noise, but with only few measurements available. The function is originally sampled at 257 points at one the interval $[-0.5, 0.5]$, and we assume that only the 20 samples with indices $$ \{22, 32,34, 40, 91, 92, 99, 112, 119, 123, 127, 146, 152, 165, 170, 174, 175, 190, 241, 244 \} $$ are known (these are randomly chosen). The real part of the original function is shown in black in the left panel of Figure \ref{fig:missing_data_perfect}, and the available data is illustrate by red circles. In the left panel the point-wise error between the reconstruction and the original function is shown in logarithmic scale. We can see that the reconstruction is accurate up to machine precision. Note that since $\mu_j=0$, this is not guaranteed from Corollary \ref{corr_weight}. However, comparatively few samples are typically needed for the method to achieve perfect recovery in the absence of noise.

\begin{figure}
\begin{center}
\includegraphics[width=0.45\linewidth,trim=0in 0in 0in 0in]{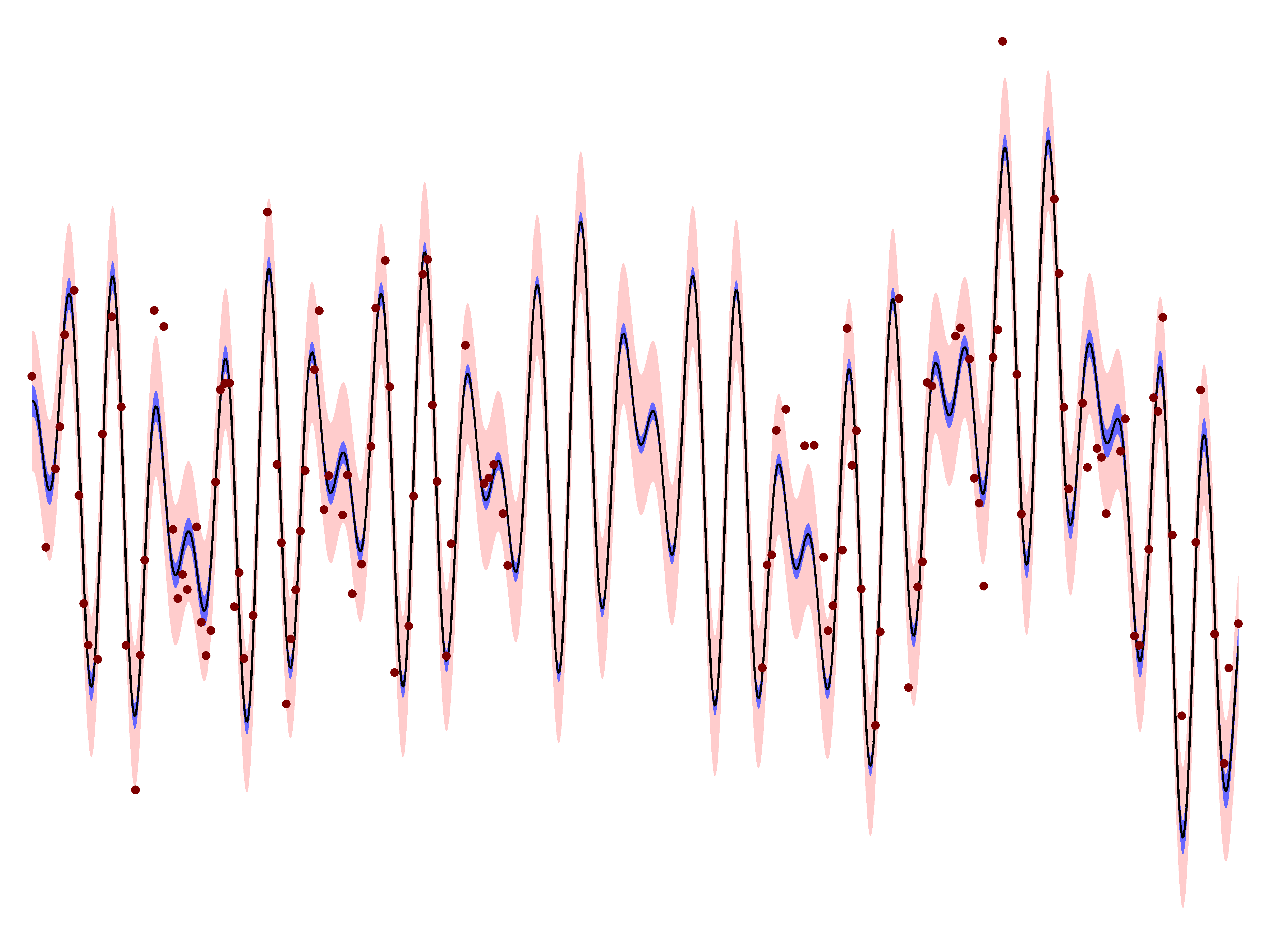}
\includegraphics[width=0.45\linewidth,trim=0in 0in 0in 0in]{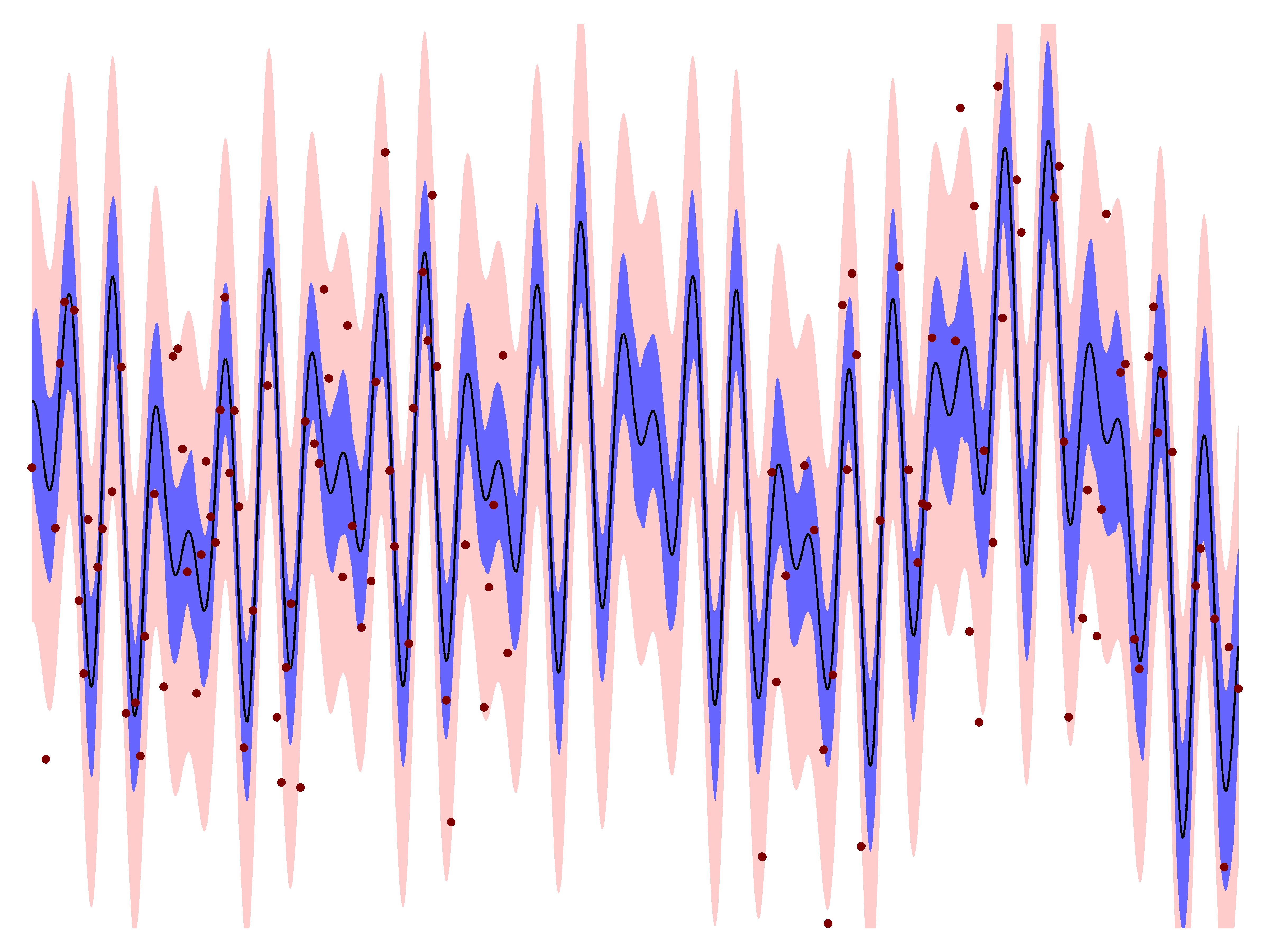}
\end{center}
\caption{Missing data problem. The right panel shows simulations for SNR=10 and the right panel for SNR=0. The black curve indicate the true values of the function; the red dots show the signal with noise at available locations; the blue areas illustrate the standard deviation of point-wise reconstructions for 100 simulations; and the pink areas illustrate the standard deviation of the added noise realizations \label{fig:missing_data}}
\end{figure}
Next, we consider the same function but now noise is added. Half of the samples are kept (129 points). Moreover, the sampling points are chosen so that there is a gap on the interval $[-0.1,0.1]$.
  We conducted two sets of simulations, one for SNR=10 and one for SNR=0, and for both noise levels we ran the simulations 100 times. In Figure \ref{fig:missing_data} we illustrate the results. The original function is displayed by the black curve, and for each of the figures, the red dots indicate the noise realization for one simulation. Note that the samples (red dots) deviate quite a lot from the true data. The pink shaded area illustrate the standard deviations for the the noise, and the blue shaded area illustrate the standard deviation of the errors from the obtained approximations. We can see that the errors in the reconstructions are substantially smaller than the original noise level. A similar experiment can also be done using the unequally spaced sampling formulation dealt with in Corollary \ref{corr_us}.

\subsection{Two-dimensional recovery along a curve}
\begin{figure}
\begin{center}
\includegraphics[width=0.8\linewidth,trim=0in .9in 0in 0.4in]{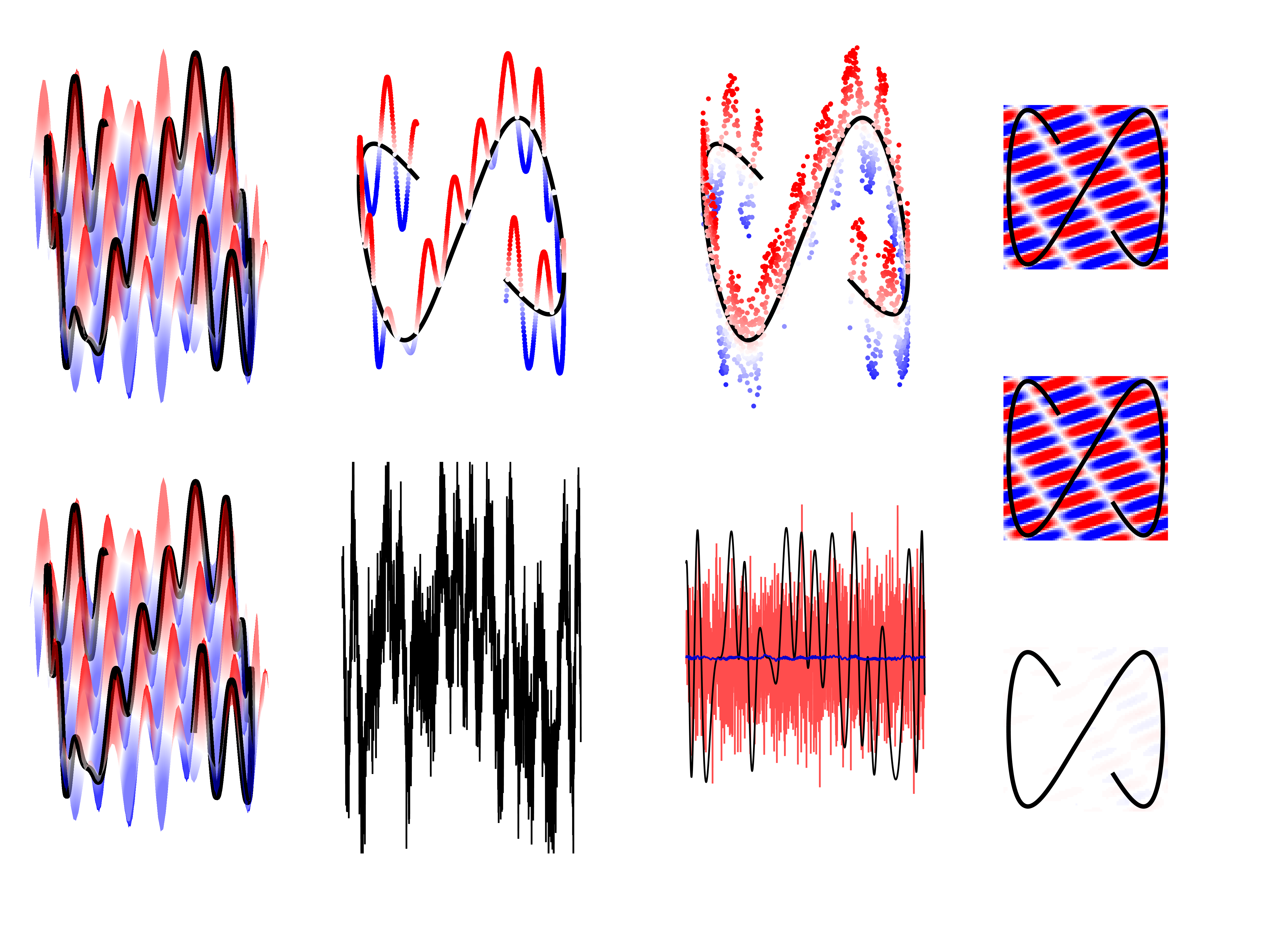}
\end{center}
\caption{Approximation of a two-dimensional function sampled along a curve. The underlying function is a sum of two exponentials. All illustrations show only the real parts.  The leftmost top panel shows the underlying sum of two exponential functions in blue and red, and the sampling curve is shown in black; the second top panel shows the sampling curve in black with the sampled values overlayed in blue and red; the third top panel left show the noisy measurement; The leftmost bottom panel shows the reconstructed data corresponding the the panel right above it; the second bottom panel depicts sampled noisy data along the curve; the third bottom panel shows the noise-free data along the sampling curve in black, the reconstruction error  (along the sampling curve) in blue and the noise is depicted in red. The rightmost column shows from top to bottom: top view of the true two-dimensional sum of two exponentials; the reconstructed sum; the corresponding reconstruction error. \label{fig:2dcurve_2}}
\end{figure}

Next, we consider the case of unequally spaced sampling in two dimensions. In particular, we will choose points densely sampled along a curve. There are no particular constraints of the method that limits it to such sampling, it is simply an interesting and comparatively difficult problem. We will use the same curve as the one ($\gamma$) shown in Figure \ref{fig:gen_domain}.
Due to the problems of satisfying the convexity constraints in Corollary \ref{corr_us}, the algorithm can converge to for instance a zero solution. To avoid problems of this type, we assume that the number of exponential functions $K$ is known in advance and choose $\tau =\sigma_K(W)/q$ at each iteration.

In a first example we try out the proposed method in the case with $K=2$ exponential functions in two variables.  The underlying function is shown at the top left panel of Figure \ref{fig:2dcurve_2}. The black curve indicates the curve along which sampling is conducted. The sampling setup is also illustrated in the second (top) panel. Here the sampling curve ($\gamma$) is indicated in black, and the sampled data values along the curve are illustrated by red, white and blue (densely sampled) dots. We perturb data by noise. For this case we apply a rather high level of noise, in particular Gaussian white noise with SNR=1 dB. The noisy measurements are illustrated in the third (top) panel from the left. We now apply the fixed-point algorithm. Upon convergence, we use the singular vectors on the (square) $\bb\Xi$-domain to obtain frequency estimates following the strategy described in \cite{andersson2010nonlinear_exp}. The corresponding coefficients can then be found by the least squares method. Once the frequencies and coefficients have been retrieved, we can reconstruct the underlying two-dimensional function. The result is shown in the bottom left panel. The second bottom panel show the measured (noisy) signal (along $\gamma$); The third (bottom) panel show the noise-free signal in black; the noise itself in red; and the residual between the noise-free and the reconstructed signals in blue. The right column of panels of Figure \ref{fig:2dcurve_2} show in top views from top to bottom: the noise-free data; the reconstruction; and the corresponding residual. We can see that a rather good reconstruction was obtained, despite the high noise-level. This is partially because the original data set consisted of only two exponential functions.
\begin{figure}
\begin{center}
\includegraphics[width=0.8\linewidth,trim=0in 0in 0in 0in]{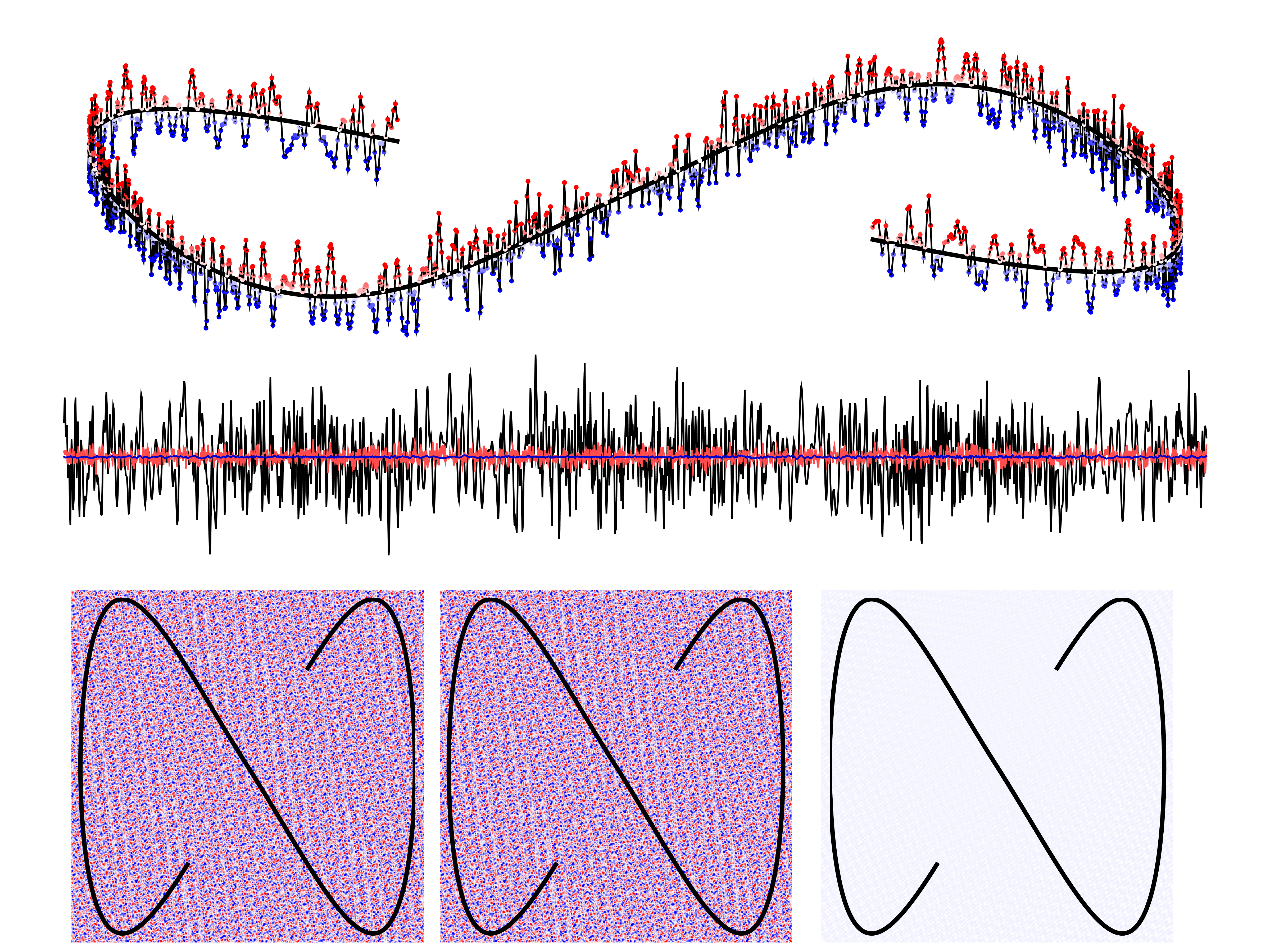}
\end{center}
\caption{Approximation of a two-dimensional function sampled along a curve. The underlying function is a sum of ten exponentials. The top panel shows the sampled data along a planar curve. The middle panel show from left to right: The true data in black, the noise in red and the reconstruction error in blue (along the curve). The bottom panel shows; the original sum of exponentials on a square; the reconstructed sum of exponentials; and reconstruction error \label{fig:2dcurve_10}}
\end{figure}

We now try the same problem but instead with $K=10$ exponential functions and an SNR of 5 dB. The top panel show the data sampled along the curve $\gamma$ as before. We can see that the data looks substantially more complicated in this case. The same procedure as for the previous example is now applied. The middle panel show the noise-free data along $\gamma$ in black and the corresponding residual with regards to the reconstruction in blue. The noise is shown in red. The three bottom panels show top views of the noise-free data; the reconstruction; and the corresponding residual for the underlying two-dimensional function. Finally, we provide the original and estimated frequencies for this two-dimensional case in Table \ref{tab:2dcurve_freq}.
\begin{table}[h]
\centering
\caption{Original and estimated complex frequencies. \label{tab:2dcurve_freq}}
\label{tab:2d}
\resizebox{\textwidth}{!}{\texttt{
\begin{tabular}{| l | l | l | l |}
\hline
$\zeta_1$                              & $\zeta_2$                              & $\zeta_1^\mathrm{est}$   & $\zeta_2^ \mathrm{est}$   \\\hline
-50.0000 + 0.i &-50.0000 + 0.i &-50.0031 + 0.00265i &	-49.9989 + 0.00255i \\ \hline
-42.5903 + 0.i &+12.8766 + 0.i &-42.5882 + 0.00118i &	+12.8765 - 0.00100i \\ \hline
-35.1806 + 0.i &+40.7788 + 0.i &-35.1817 + 0.00142i &	+40.7763 - 0.00027i \\ \hline
-20.3613 + 0.i &-2.72062 + 0.i &-20.3618 + 0.00278i &	-2.72020 - 0.00275i \\ \hline
-6.67724 + 0.i &-34.2402 + 0.i &-6.67772 - 0.00267i &	-34.2389 + 0.00089i \\ \hline
+4.98657 + 0.i &-42.4351 + 0.i& +4.98607 - 0.00091i &	-42.4358 + 0.00056i \\ \hline
+9.27734 + 0.i &+18.0841 + 0.i &+9.27612 - 0.00129i	 &       +18.0857 - 0.00397i \\ \hline
+19.4458 + 0.i &-9.77757 + 0.i &+19.4469 + 0.00072i &	-9.77517 - 0.00032i \\ \hline
+36.6455 + 0.i &-27.3052 + 0.i & +36.6436 + 0.00016i &	-27.3033 + 0.00502i \\ \hline
+47.9858 + 0.i &+33.6813 + 0.i & +47.9836 + 0.00042i &	+33.6802 + 0.00293i \\ \hline
%
\end{tabular}
}}
\end{table}

\section{Conclusions}\label{secconcl}
We have presented fixed-point algorithms for the approximation of functions by sparse sums of exponentials. These algorithms hold for the general class of approximation problems with a rank and subspace constraint on the solution. The proposed method can deal with the standard frequency estimation problem (written in terms of Hankel matrices), but it also extends to more general cases, for instance in the presence of weights and unequally spaced sampling. We provide theorems about convergence, and also a condition on if the optimal solution was obtained. We also describe how to apply the results for the approximation using sparse sums of exponentials in several variables. The formulation allows for approximations and estimation in general sampling geometries, and we illustrate this performing two-dimensional frequency estimation given data sampled along a curve.


\bibliographystyle{plain}
\bibliography{referenser}

\end{document}